\documentclass{article}

\usepackage[T1]{fontenc}
\usepackage{enumerate, amsmath, amsfonts, amssymb, amsthm, dsfont, mathrsfs, wasysym, graphics, graphicx, xcolor, url, hyperref, hypcap, xargs, multicol, pdflscape, multirow, hvfloat, array, ae, aecompl, pifont, mathtools, a4wide, float, blkarray, overpic, nicefrac, stmaryrd, anyfontsize, yfonts, fontawesome}
\usepackage{pdflscape}
\usepackage{xargs, bbm, enumerate, paralist}
\usepackage[noabbrev,capitalise]{cleveref}
\usepackage[normalem]{ulem}
\usepackage{marginnote}
\usepackage{mathrsfs}
\usepackage{enumitem}
\usepackage{svg}
\usepackage{cleveref}
\usepackage{mathdots}

\hypersetup{colorlinks=true, citecolor=darkblue, linkcolor=darkblue}
\usepackage[all]{xy}
\usepackage{tikz}
\usepackage{tikz-cd}
\usetikzlibrary{trees, decorations, decorations.pathmorphing, decorations.markings, decorations.shapes, shapes, arrows, matrix, calc, fit, intersections, patterns, angles}
\graphicspath{{figures/}{figures/diagonals/}{figures/walks/}{figures/tubes/}{figures/blocks/}}
\makeatletter\def\input@path{{figures/}}\makeatother
\usepackage{caption}
\captionsetup{width=\textwidth}
\usepackage[export]{adjustbox}

\usepackage{paralist}


\DeclareFontEncoding{LY}{}{}
\DeclareFontSubstitution{LY}{yfrak}{m}{n}
\DeclareFontEncoding{LYG}{}{}
\DeclareFontSubstitution{LYG}{ygoth}{m}{n}
\DeclareFontFamily{LYG}{ygoth}{}
\DeclareFontShape{LYG}{ygoth}{m}{n}{<->ygoth}{}
\DeclareFontFamily{LY}{yfrak}{}
\DeclareFontShape{LY}{yfrak}{m}{n}{<->yfrak}{}
\DeclareFontFamily{LY}{ysmfrak}{}
\DeclareFontShape{LY}{ysmfrak}{m}{n}{<->ysmfrak}{}
\DeclareFontFamily{LY}{yswab}{}
\DeclareFontShape{LY}{yswab}{m}{n}{<->yswab}{}

\newtheorem{thmUniv}{Theorem}

\newtheorem{conjUniv}{Conjecture}

\newtheorem{theorem}{Theorem}[section]
\newtheorem{corollary}[theorem]{Corollary}
\newtheorem{proposition}[theorem]{Proposition}
\newtheorem{lemma}[theorem]{Lemma}
\newtheorem{conjecture}[theorem]{Conjecture}
\newtheorem*{theorem*}{Theorem}

\theoremstyle{definition}
\newtheorem{definition}[theorem]{Definition}
\newtheorem{example}[theorem]{Example}
\newtheorem{remark}[theorem]{Remark}
\newtheorem{question}{Question}

\newtheorem{Question}{Open Question}

\crefname{equation}{Equation}{Equations}

\newcommand{\R}{\mathbb{R}} 
\newcommand{\Q}{\mathbb{Q}} 
\newcommand{\N}{\mathbb{N}} 
\newcommand{\Z}{\mathbb{Z}} 
\newcommand{\cL}{\mathcal{L}} 
\renewcommand{\c}[1]{{\mathcal{#1}}} 
\renewcommand{\b}[1]{{\boldsymbol{#1}}} 

\renewcommand{\epsilon}{\varepsilon} 
\newcommand{\polytope}[1]{\mathsf{#1}}

\newcommand\pol{\polytope{P}}

\newcommand{\simplex}{\polytope{\Delta}} 

\DeclareMathOperator{\conv}{conv} 

\DeclareMathOperator{\vol}{Vol}

\newcommand{\ie}{\textit{i.e.,}~} 
\newcommand{\eg}{\textit{e.g.,}~} 
\newcommand{\aka}{\textit{a.k.a.}~} 
\definecolor{darkblue}{rgb}{0,0,0.7} 
\definecolor{green}{RGB}{57,181,74} 
\definecolor{violet}{RGB}{147,39,143} 
\newcommand{\darkblue}{\color{darkblue}} 
\newcommand{\defn}[1]{\textsl{\darkblue #1}} 
\newcommand{\mathdefn}[1]{{\darkblue #1}} 

\newcommand{\asc}{\text{asc}}

\newcommand{\Asc}{\text{Asc}}
\newcommand{\Pyr}{\polytope{Pyr}}
\newcommand{\Sn}{\mathfrak{S}_n}

\newcommand{\Is}{I_n^{\b s}}
\newcommand{\Pn}{\mathcal{P}_n^{\b s}}
\newcommandx{\Epoly}[3][1=\b s, 2=n, 3=t]{E_{#2}^{#1}(#3)}

\usepackage{todonotes}

\AtEndDocument{\bigskip{\footnotesize%
 \textsc{Universität Osnabrück, Germany} \\
 Martina Juhnke: \texttt{martina.juhnke@uni-osnabrueck.de} \\ 
Germain Poullot: \texttt{germain.poullot@uni-osnabrueck.de} \\
Jhon  B. Caicedo: \texttt{jhon.bladimir.caicedo.portilla@uni-osnabrueck.de}\par }}

\begin{document}

\title{Ehrhart non-positivity and unimodular triangulations for classes of $\b s$-lecture hall simplices}
\author{Jhon B. Caicedo, Martina Juhnke and Germain Poullot}
\date{}

\maketitle

\begin{abstract}

Counting lattice points and triangulating polytopes are prominent subjects in discrete geometry, yet proving Ehrhart positivity or existence of unimodular triangulations remain of utmost difficulty in general,  even for simplices.
We study these questions for classes of $\b s$-lecture hall simplices.


Inspired by a question of Olsen, we present a new \emph{natural} class of sequences $\b s$ for which the $\b s$-lecture hall simplices are not Ehrhart positive, by explicitly estimating a negative coefficient.
Meanwhile, motivated by a conjecture of Hibi, Olsen and Tsuchiya, we extend the previously known classes of sequences $\b s$ for which the $\b s$-lecture hall simplex admits a flag, regular and unimodular triangulation.
The triangulations we construct are explicit.

\end{abstract}

\renewcommand{\baselinestretch}{0.86}\normalsize
\tableofcontents
\renewcommand{\baselinestretch}{1.0}\normalsize

\section*{Acknowledgement}
Jhon B. Caicedo was supported by DFG grant JU 3097/4-1, he is also thankful to Katharina Jochemko and Torben Donzelmann for insightful mathematical conversation that helped him understand the topic.

Germain Poullot wants to thank the two other authors for asking what seemed a computational question, and turned out to be an exciting paper.

\section{Introduction}

Lecture hall polytopes form an important family of lattice polytopes that have attracted significant attention in recent years due to their rich combinatorial structure and intriguing geometric behavior.
Given a sequence of positive integers $\b s = (s_1, \ldots, s_n)$, the  $\b s$-lecture hall simplex, denoted by \defn{$\Pn$}, is defined as the set of points $\b x = (x_1,\ldots,x_n)\in \R^n$ satisfying the inequalities:

$$0\leq \frac{x_1}{s_1}\leq \cdots \leq \frac{x_n}{s_n}\leq 1.$$

Although $\b s$-lecture hall simplices have been extensively studied, they continue to raise fundamental questions in very active areas of mathematics, including Ehrhart theory and polyhedral geometry \cite{Olsen-question,GUSTAFSSON2020107169, FlorianAndOlsen}.  
This work is motivated by the two central problems on $\b s$-lecture hall simplices: their Ehrhart (non)positivity, and the existence of unimodular triangulations.

\paragraph{Ehrhart (non) positivity}
In \cite{Olsen-question}, Olsen asked the following question (see Question 6.8 therein): 
\begin{question}
    Under which conditions on the sequence $\b s$ is the $\b s$-lecture hall simplex $\mathcal{P}_n^{\b s}$ Ehrhart positive or \emph{not} Ehrhart positive?
\end{question}

Some partial results regarding this question have been established. 
It is well-known, see Savage and Schuster \cite[Corollary~1]{SAVAGE2012850}, that if $\b s = (1, 2, \ldots, n)$, then the coefficients of the Ehrhart polynomial of $\Pn$ are the binomial coefficients (\ie its Ehrhart polynomial is $\c L_{\Pn}(t) = (t+1)^n$), hence they are positive.
They also proved in \cite[Theorem 13]{SAVAGE2012850} that for the sequence defined by $s_{2i} = 4i$ and $s_{2i - 1} = 2i - 1$, \ie $\b s = (1, 4, 3, 8, 5, 12, \ldots)$, we get $\mathcal{L}_{\mathcal{P}_n^{\b s}}(t) = (t+1)^{\lceil n/2 \rceil} (2t + 1)^{\lfloor n/2 \rfloor}$, whose coefficients are also positive.
Similar positivity results have been shown for similar sequences.

On the opposite, Liu and Solus \cite[Theorem 4.3]{LiuAndSolus2019} specified conditions on $a$, $b$ and the number of $1$s in the sequence $\b s = (1,\dots,1,a,1,\dots,1,b,1,\dots,1)$ such that its $\b s$-lecture hall simplex is not Ehrhart positive (\ie there is at least one negative coefficient in its Ehrhart polynomial $\cL_{\Pn}(t)$).

We exhibit a new and somewhat unexpected sequence $\b s$ for which the lecture hall simplex $\Pn$ fails to be Ehrhart positive.
Specifically, we prove the following theorem.

\begin{thmUniv}[{\Cref{The:Negativity}}]\label{thm:A}
For any $n\geq 5$, and any $a$ big enough, the simplex $\Pn$ is \textbf{not} Ehrhart positive for the sequence $\b s = (a,\dots, a, a + 1)$ of length $n$.
More precisely, as $a\to \infty$, we have:
$$[t^{n-4}]\cL_{\Pn}(t)\sim -\,\frac{1}{720(n-4)!}\, a^{n-1}$$
where $[t^{n-4}]\cL_{\Pn}(t)$ denotes the coefficient of $\cL_{\Pn}(t)$ in front of $t^{n-4}$.
\end{thmUniv}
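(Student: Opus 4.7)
The plan is to derive an explicit closed form for $\cL_{\Pn}(t)$ by direct enumeration, then extract the coefficient of $t^{n-4}$ via Faulhaber's formula, exploiting the sign of the Bernoulli number $B_4 = -\tfrac{1}{30}$. First, for $\b s = (a,\ldots,a,a+1)$ the defining inequalities of $t\Pn$ rewrite as $0 \leq x_1 \leq \cdots \leq x_{n-1}$, $(a+1)x_{n-1} \leq a x_n$, and $x_n \leq (a+1)t$. Summing over $x_n$, the number of valid $(x_1,\ldots,x_{n-1})$ is $\binom{\lfloor a x_n/(a+1)\rfloor + n-1}{n-1}$. Writing $x_n = (a+1)q + r$ with $0 \leq r \leq a$, one has $\lfloor a x_n/(a+1)\rfloor = aq + r - \mathbf{1}[r \geq 1]$. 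Separating $r=0$ from $r\geq 1$, collapsing the resulting inner $r$-sum via the hockey-stick identity, and telescoping over $q$ should yield the clean identity
\[
\cL_{\Pn}(t) = \binom{at+n}{n} + \sum_{q=0}^{t-1} \binom{aq+n-1}{n-1}.
\]

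Next I extract $[t^{n-4}]\cL_{\Pn}(t)$. The first piece $\binom{at+n}{n} = \tfrac{1}{n!}\prod_{j=1}^n(at+j)$ contributes only $\tfrac{e_4(1,\ldots,n)}{n!}\, a^{n-4}$ in front of $t^{n-4}$, which is subleading. For the sum, I expand
\[
\binom{aq+n-1}{n-1} = \frac{1}{(n-1)!} \sum_{k=0}^{n-1} e_{n-1-k}(1,\ldots,n-1)\, a^k\, q^k
\]
and apply Faulhaber's formula $\sum_{q=0}^{t-1} q^k = \frac{1}{k+1} \sum_{j=0}^{k} \binom{k+1}{j} B_j\, t^{k+1-j}$. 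Matching $k+1-j = n-4$ forces $j = k-n+5$, so only $k \in \{n-5,\ldots,n-1\}$ contribute to the coefficient of $t^{n-4}$. The dominant term is $k = n-1$, equal to
\[
\frac{\binom{n}{4} B_4}{n!}\, a^{n-1} = -\frac{1}{720(n-4)!}\, a^{n-1},
\]
after using $B_4 = -\tfrac{1}{30}$ and $\binom{n}{4}/n! = \tfrac{1}{24(n-4)!}$. Crucially, the $k = n-2$ term vanishes because $B_3 = 0$, and the remaining contributions for $k \in \{n-5, n-4, n-3\}$ are at most $O(a^{n-3})$.

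Assembling these pieces gives $[t^{n-4}]\cL_{\Pn}(t) = -\tfrac{1}{720(n-4)!}\, a^{n-1} + O(a^{n-3})$, which is the claimed asymptotic and is strictly negative once $a$ is large enough, proving non-positivity. I expect the main technical obstacle to be bookkeeping the Bernoulli expansion carefully, since \emph{a priori} one must check that no hidden $a^{n-1}$ contribution from a lower value of $k$ cancels the leading term; this is precisely what the vanishing $B_3 = 0$ ensures. Conceptually, the phenomenon is a direct manifestation of the sign $B_4 < 0$: it is this single Bernoulli number that forces the coefficient of $t^{n-4}$ (rather than some other position) to become negative.
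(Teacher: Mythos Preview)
Your proof is correct and takes a genuinely different, more elementary route than the paper. Both approaches ultimately rest on the identity
\[
\cL_{\Pn}(t) \;=\; \binom{at+n}{n} \;+\; \sum_{q=0}^{t-1}\binom{aq+n-1}{n-1},
\]
which you obtain by direct lattice-point enumeration and which the paper derives geometrically (as \eqref{eq:crucial} in the proof of \Cref{thm:positivity}) via a pyramid over the facet $\{x_{n-1}=x_n\}$. The paper, however, explicitly abandons this formula for general $n$, arguing that extracting coefficients from \eqref{eq:crucial} becomes ``impossible to handle'', and instead proceeds through the $h^\ast$-splitting of \Cref{prop:recursion_star}, an asymptotic for $h^\ast_i(\b a^{(n-1)})$ via hypersimplex volumes and Eulerian numbers, a closed form for $e_3(1-\ell,\ldots,n-1-\ell)$ (\Cref{lemma:elementary_symmetric}), and a generating-function identity for a weighted Eulerian sum (\Cref{eq:Eulerian_property}) that produces the constant $\tfrac{1}{15}$.

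Your key observation --- that expanding $\binom{aq+n-1}{n-1}$ in powers of $q$ and applying Faulhaber's formula termwise isolates the $t^{n-4}$ coefficient with leading contribution $\tfrac{\binom{n}{4}B_4}{n!}\,a^{n-1}$, while the next candidate ($k=n-2$) vanishes because $B_3=0$ --- bypasses all of that machinery. Your hockey-stick/telescoping derivation of the displayed identity also goes through exactly as you outline. The payoff is a sharper error term $O(a^{n-3})$ and a transparent conceptual explanation: the non-positivity at $t^{n-4}$ is forced precisely by the sign of $B_4=-\tfrac{1}{30}$. The paper's longer route has independent interest in that it surfaces connections to Eulerian numbers and hypersimplex volumes, but your argument is the cleaner proof of the stated asymptotic.
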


This result came as a surprise to us since, for $\b a = (a, \ldots, a)$, the  simplex $\mathcal{P}_n^{\b a}$ is unimodularly equivalent to the $a$\textsuperscript{th} dilation of the  standard simplex, which is well-known to be Ehrhart positive (its Ehrhart polynomial is $\c L_{\c P_n^{\b a}}(t) = \binom{at+n}{n+1}$).
This further emphasizes the subtle combinatorial and geometric complexity of $\b s$-lecture hall simplices.

\paragraph{Unimodular (flag, regular) triangulations}
The second problem is the following conjecture by Hibi, Olsen, and Tsuchiya:
\begin{conjUniv}[{\cite[Conjecture~5.2]{hibi2016}}]
For any sequence of positive integers $\b s$, the polytope $\mathcal{P}_n^{\b s}$ admits a unimodular triangulation.
\end{conjUniv}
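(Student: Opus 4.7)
The plan is to attack this conjecture by constructing, for each sequence $\b s$, an explicit triangulation of $\mathcal{P}_n^{\b s}$ and verifying that it is unimodular. The natural starting point is to parametrize the lattice points of $\mathcal{P}_n^{\b s}$ via the associated $\b s$-inversion sequences $\Is$, since this combinatorial model already underlies the known Ehrhart-series computations and puts a clean order-theoretic structure on the candidate vertices of the triangulation. I would then try to produce a triangulation whose maximal simplices are indexed by maximal chains in a carefully chosen poset on $\Is$, so that unimodularity becomes a determinant computation that can be carried out locally, simplex by simplex.

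Concretely, I would first investigate a pulling triangulation from a well-chosen vertex of $\mathcal{P}_n^{\b s}$, typically the origin, and describe its maximal simplices as the maximal chains of lattice points with respect to the lecture hall order $\tfrac{x_1}{s_1}\le\dots\le\tfrac{x_n}{s_n}$. For sequences such as $\b s=(1,2,\ldots,n)$ or more generally when $s_i \mid s_{i+1}$, such chains admit a transparent description and unimodularity can be read off from the structure of the corresponding incidence matrices. To extend this to general $\b s$, I would introduce a weight function $\omega$ on the lattice points of $\mathcal{P}_n^{\b s}$, chosen so that the induced regular subdivision coincides with the combinatorially defined one, and then verify that $\omega$ is generic enough to yield a triangulation (not a coarser subdivision) and that the resulting simplices are flag, for instance by producing a square-free quadratic Gröbner basis for the toric ideal of $\mathcal{P}_n^{\b s}$ with respect to a term order induced by $\omega$.

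The main obstacle I anticipate is that the combinatorics of lattice points of $\mathcal{P}_n^{\b s}$ depends very delicately on the arithmetic of $\b s$ --- divisibility relations among the $s_i$, common factors, and the position of ``large'' entries relative to ``small'' ones all change the local structure of the simplex. The known positive cases, such as the Savage--Schuster sequences and the families treated later in this paper, all rely on special multiplicative structure permitting a block decomposition of $\mathcal{P}_n^{\b s}$ into smaller pieces that are themselves lecture hall simplices (or products thereof) admitting unimodular triangulations. For a general $\b s$ no such decomposition is available, and the weight function $\omega$ above must be tailored to $\b s$ in a way that currently seems to require case analysis. I therefore expect the decisive obstacle to be the formulation of a purely combinatorial criterion on $\b s$ guaranteeing that a uniform weight function (or uniform chain-decomposition) works.

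Given the apparent hardness of the fully general statement, the realistic version of the plan --- the one pursued in the present paper --- is to isolate further natural families of sequences $\b s$ whose arithmetic is compatible with a block decomposition, and to construct explicit flag, regular, unimodular triangulations family by family, in the spirit of extending the sequences already treated by Savage--Schuster and by Hibi--Olsen--Tsuchiya. Progress toward the full conjecture would then come from gluing such constructions along ``interfaces'' where two families meet, and from identifying which local modifications of $\b s$ preserve the existence of a unimodular triangulation, so that eventually every sequence can be reached by a finite sequence of such modifications from a known positive case.
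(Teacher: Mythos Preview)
The statement you are addressing is an open \emph{conjecture}; the paper does not prove it and does not claim to. There is no ``paper's own proof'' to compare against. What the paper does is provide further partial evidence (\Cref{cor:extended}) by enlarging the class of sequences $\b s$ for which a flag, regular, unimodular triangulation is known.

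Your write-up is not a proof either, and to your credit you do not pretend otherwise: it is a research programme with an honestly flagged central obstacle (the arithmetic of $\b s$ obstructs a uniform choice of weight or chain decomposition). That said, the concrete mechanisms you propose --- pulling triangulations from the origin, maximal chains in a poset on $\Is$, a square-free quadratic Gr\"obner basis for the toric ideal --- are not the ones the paper uses for its partial results. The paper's inductive engine is geometric: it realizes $\mathcal{P}_n^{(\b s,s_n+1)}$ as a one-point extension of a translate of $\mathcal{P}_n^{(\b s,s_n)}$ (\Cref{lemma:s_lecture_one_point_extension}), and extends a given flag, regular, unimodular triangulation across the new region by coning the triangulation of a single visible facet to the new vertex (\Cref{thm:One_point_triangulation}, \Cref{lem:Unimodular_triangulation}). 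Combined with the chimney-polytope step of Hibi--Olsen--Tsuchiya for the multiplicative case $s_n=k\cdot s_{n-1}$, this yields \Cref{Thm:unimodular-tri} and hence \Cref{cor:extended}. Your ``gluing along interfaces'' intuition is morally close to this, but the actual argument is a one-point extension, not a Gr\"obner or pulling construction; in particular the paper's triangulations are explicit and do not pass through the toric ideal at all.
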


Evidence in support of this conjecture has been provided in several cases.
The first was given by Hibi, Olsen, and Tsuchiya using the concept of \emph{chimney polytopes} (see \cite[Section~3]{hibi2016}).
They considered a sequence $\b s = (s_1, \ldots, s_n)$ of positive integers such that for all $i$, one has $s_{i+1} = k_i\cdot s_i$ or $s_{i+1} = \frac{s_i}{k_i}$ for  integers $k_i \geq 1$. 
In this case, the $\b s$-lecture hall simplex admits a unimodular triangulation. (Even though Hibi, Olsen and Tsuchiya only mention unimodularity  explicitly, their triangulation is also flag and regular, we have no doubt that these authors knew it.)
Moreover, Br\"and\'en and Solus (see \cite[Corollary~3.5]{SolusAndBradden}) showed that for any sequence $\b s = (s_1, \ldots, s_n)$ satisfying $0\leq s_{i+1} - s_i \leq 1$ the corresponding $\b s$-lecture hall simplex admits a flag, regular, and unimodular triangulation.
Yet, their construction is not explicit as it relies heavily on Gr\"obner bases.

Building on these developments, we propose in \Cref{ssec:PrelimPointExtension} a unifying framework that will allow us in \Cref{Section:Regular_Unimodular} to generalize the results of Br\"and\'en and Solus by leveraging the construction of Hibi, Olsen and Tsuchiya. Our construction has the asset of keeping the triangulation explicit, using purely combinatorial and geometric arguments.
Explicitly, we have the following (see \Cref{ssec:PrelimTriangulations} for the  definitions of flag, regular, unimodular):
\newpage



\begin{thmUniv}[{\Cref{cor:extended,coro:join,Remark:dilation}}]\label{thm:B}
Let $\b s = (s_1,\dots, s_n)\in \N^n$. 
The following two cases ensures that $\mathcal{P}_{n}^{\b s}$ has a flag, regular, unimodular triangulation:
\begin{compactenum}
\item[(a)] 
If there exists $1\leq m\leq n$ such that
\begin{compactenum}
\item[(1)] for all $1\leq i\leq m-1$ either $s_i = 1$ or there is $k_i\geq 1$, $\epsilon_i\in\{0,1\}$ with $s_{i+1} =  \frac{s_{i}-\epsilon_i}{k_i}$; and,
\item[(2)] for all $m\leq i\leq n-1$ either $s_{i+1} = 1$ or there is $k_i\geq 1$, $\epsilon_i\in\{0,1\}$ with $s_{i+1} = k_i\cdot s_{i} + \epsilon_i$;
\end{compactenum}
then $\mathcal{P}_{n}^{\b s}$ has a flag, regular, unimodular triangulation.

\item[(b)] If $\b s = (\lambda_1\b s_1, 1, \lambda_2\b s_2, 1, \dots, 1, \lambda_r\b s_r)$ where for each $\b s_j\in \N^{n_j}$ the polytope $\c P_{n_j}^{\b s_j}$ admits a flag, regular, unimodular triangulation (\eg if $\b s_j$ satisfies (a)), and $\lambda_j\b s_j = (\lambda_j\cdot\b s_{j, 1}, \dots, \lambda_j\cdot\b s_{j, n_j})$ for some integer $\lambda_j\geq 0$, then $\c P_n^{\b s}$ admits a flag, regular, unimodular triangulation.
\end{compactenum}
\end{thmUniv}


To conclude \Cref{Section:Regular_Unimodular}, we prove the existence of flag, regular, unimodular triangulations of $\Pn$ for the sequences $\b s = (a, \dots, a, a+1)$ and $\b s = (1, \dots, 1, a, 1, \dots, 1, b, 1, \dots, 1)$  for which the $\b s$-lecture hall simplex is known to be Ehrhart non-positive (with conditions on $a$, $b$, and the number of consecutive $1$s, see \Cref{thm:positivity} and \cite[Theorem~4.3]{LiuAndSolus2019}).
Finally, motivated by this new construction, we propose a strengthening of the  previous conjecture, namely:

\begin{conjUniv}
For any sequence of positive integers $\b s$, the polytope $\mathcal{P}_n^{\b s}$ admits a triangulation which is flag, regular, and unimodular.
\end{conjUniv}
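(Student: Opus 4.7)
The natural plan is to push the point-extension framework of \Cref{ssec:PrelimPointExtension} to its limit and proceed by induction on $n$. For $n=1$, the simplex $\mathcal{P}_1^{\b s}$ is a segment of length $s_1$, trivially triangulated. For the inductive step, given $\b s = (s_1,\dots,s_n)$, I would try to locate an index $i$ and a shorter sequence $\b s'$ of length $n-1$ such that $\Pn$ arises from $\mathcal{P}_{n-1}^{\b s'}$ by a point extension, lifting the flag, regular, unimodular triangulation obtained by induction. \Cref{thm:B} already supplies this when the ratios between consecutive $s_i$'s are of the constrained type $s_{i+1} = k_i s_i + \epsilon_i$ with $\epsilon_i \in \{0,1\}$ (or the symmetric downward case), and the strategy is to remove the restriction on $\epsilon_i$.

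The key technical step is to handle general remainders $r_i$ in $s_{i+1} = k_i s_i + r_i$, $0 \leq r_i < s_i$. One route is a two-step interpolation: insert an auxiliary coordinate between positions $i$ and $i+1$ so that both the step $s_i \to s_{\text{aux}}$ and $s_{\text{aux}} \to s_{i+1}$ fall within the scope of \Cref{thm:B}, triangulate the enlarged simplex in $\R^{n+1}$, and then collapse the auxiliary direction. One must then prove that the projection remains a triangulation (not merely a subdivision) and preserves all three properties. Regularity could be maintained by carefully coordinating the lifting function with the direction of projection (\`a la Cayley--Gale), flagness by performing the construction as a pulling triangulation at a well-chosen vertex, and unimodularity by verifying that the collapsing direction is compatible with the lattice structure imposed by $\b s$.

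The main obstacle I expect is unimodularity. Flagness is usually within reach via pulling, and regularity via generic lifting, but unimodularity is a rigid arithmetic constraint asking every top-dimensional simplex to have normalized volume exactly $1$; the divisibility hypotheses of \Cref{thm:B} are precisely what makes this work. Escaping them likely requires a genuinely new combinatorial model of $\Pn$, for instance one in which the simplices of the triangulation are indexed by suitably chosen chains of $\b s$-inversion sequences, with unimodularity readable directly from the indexing. Designing such a model and showing it yields a flag and regular triangulation for every $\b s$ looks like a substantial project; the results of \Cref{thm:B} and of \cite{SolusAndBradden, hibi2016} should probably arise as special cases of its specialization to sequences with divisibility relations. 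In other words, I do not expect the conjecture to follow from a refinement of the present point-extension toolkit alone, which is why it is stated here rather than proved.
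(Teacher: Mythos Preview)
The statement you were asked to prove is not a theorem in the paper: it is presented as an open conjecture (a strengthening of the conjecture of Hibi, Olsen and Tsuchiya), and the paper offers no proof. So there is nothing to compare your attempt against on the level of a complete argument.

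That said, your proposal is not a proof either, and you correctly acknowledge this in your final sentence. What you have written is a reasonable research sketch, and in fact it aligns well with the paper's own assessment of the obstruction. Your identification of the case $s_{i+1} = k_i s_i + r_i$ with $r_i \geq 2$ as the critical missing piece is exactly the content of the paper's Open Question~1: the authors explain that for $\varepsilon \geq 2$ the one-point extension adds several new lattice points in the region $\polytope{R}_3$ (see \Cref{fig:remark_final}), so the extension produces simplices that miss interior lattice points and hence cannot be unimodular. Your instinct that unimodularity is the hard constraint, while flagness and regularity are more robust under the point-extension mechanism, matches the paper's viewpoint precisely.

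Your proposed workaround---inserting an auxiliary coordinate and then projecting---is speculative and not pursued in the paper; the paper gives no indication that such a projection would preserve unimodularity, and indeed this is where your sketch is weakest: you would need the collapsed simplices to remain lattice simplices of normalized volume one, and there is no mechanism offered for why this should happen. The paper instead leaves the general case entirely open.
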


After recalling some background knowledge on Ehrhart theory, triangulations, and $\b s$-lecture hall simplices in \Cref{section:preliminaries}, we explain how to triangulate a one-point extension of a polytope in \Cref{ssec:PrelimPointExtension}.
We prove \Cref{thm:A} in \Cref{Section:Ehrhart_nonpositive}, while \Cref{Section:Regular_Unimodular} is devoted to \Cref{thm:B}.

\section{Preliminaries}{\label{section:preliminaries}}
In this section, we provide necessary background on Ehrhart theory, as well as $\b s$-lecture hall simplices, and their relation to $\b s$-Eulerian polynomials. We refer to \cite{SAVAGE2012850,beck2015computing} for more details.

\subsection{Ehrhart polynomial and $h^\ast$-vector}\label{Def:Ehrhart_poly}

A \defn{$d$-dimensional lattice polytope} $\pol\subset \R^n$ is the convex hull of finitely many points in $\Z^n$ whose affine span is $d$-dimensional. 
For a positive integer $t$, we let $\mathdefn{t\polytope{P}} \coloneqq \{t\b p ~:~ \b p\in \polytope{P}\}$ be the $t$\textsuperscript{th} dilation of $\polytope{P}$. 
The \defn{lattice point enumerator $\mathcal{L}_{\pol}$} (\aka \defn{Ehrhart polynomial}) of $\pol$ counts the numbers of lattice points in the (integer) dilates of $\pol$. More precisely, for $t \in \N$:
\[
\mathcal{L}_{\pol}(t) \coloneqq \bigl|t\pol \cap \Z^n\bigr|.
\]

A fundamental theorem going back to Ehrhart \cite{Ehrhart_poly} ensures that, for a $d$-dimensional lattice polytope $\pol$, the counting function $\cL_{\pol}(t)$ is a polynomial of degree $d$, with leading term $\vol(\polytope{P})t^{d}$.
The polytope $\polytope{P}$ is called \defn{Ehrhart positive} if all coefficients of $\c L_{\pol}(t)$ (written in the standard monomial basis) are non-negative.
Determining if a polytope is Ehrhart positive or not is a rich and active topic in (lattice) polytope theory (see \cite{Liu2019}).
The generating function of $\cL_{\pol}(t)$, referred to as \defn{Ehrhart series} of $\pol$, is given as the following rational function:
\[
\sum_{t\geq 0}\mathcal{L}_{\pol}(t)z^{t} = \frac{1}{(1-z)^{d+1}}\bigl(h_0^\ast + h_1^\ast z + \dots + h_{d}^\ast\,z^{d}\bigr).
\]

The polynomial $h_0^\ast + h_1^\ast z + \dots + h_d^\ast z^{d}$ is called the \defn{$h^\ast$-polynomial} of $\pol$, and the vector $(h_0^\ast,\ldots,h^\ast_d)$ is called the \defn{$h^*$-vector} of $\pol$. 
Stanley’s non-negativity theorem \cite[Theorem~2.1]{stanley1980decompositions} ensures that the $h^\ast$-vector has only non-negative integer entries.
By definition, the Ehrhart polynomial and the $h^\ast$-vector carry the same information.
Especially, the former can be retrieved from the latter:

\begin{lemma}[{\cite[Lemma 3.14]{beck2015computing}}]\label{Lemma:Beck_book}
For a $d$-dimensional lattice polytope $\pol$, we have:
$$\mathcal{L}_{\pol}(t) = h^\ast_d\binom{t}{d} ~ + \cdots +~ h_1^\ast\binom{t + d - 1}{d} + h_0^\ast\binom{t + d}{d}$$



\end{lemma}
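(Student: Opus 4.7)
The plan is to derive the identity by a direct formal power series manipulation, and then extend the result from integer values of $t$ to a polynomial identity in $t$.

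First, I would exploit the standard negative binomial expansion
$$\frac{1}{(1-z)^{d+1}} \;=\; \sum_{t \geq 0} \binom{t+d}{d} z^t.$$
Multiplying by $z^k$ and re-indexing the sum gives the useful auxiliary identity
$$\frac{z^k}{(1-z)^{d+1}} \;=\; \sum_{t \geq 0} \binom{t+d-k}{d}\, z^t,$$
valid as an equality of formal power series for $0 \le k \le d$. The coefficient matching here is the one point that deserves care: for $0 \le t < k$, one has $t+d-k \in \{0, 1, \dots, d-1\}$, and the polynomial $\binom{x}{d} = x(x-1)\cdots(x-d+1)/d!$ vanishes on this set. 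Hence no spurious low-order terms appear, and the shifted series really equals $z^k/(1-z)^{d+1}$.

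Second, I would plug this into the definition of the $h^*$-polynomial. Writing $h^*(z) = \sum_{k=0}^d h_k^* z^k$, the Ehrhart series becomes
$$\sum_{t \geq 0} \mathcal{L}_{\pol}(t)\, z^{t} \;=\; \frac{h^*(z)}{(1-z)^{d+1}} \;=\; \sum_{k=0}^d h_k^* \sum_{t \geq 0} \binom{t+d-k}{d} z^t \;=\; \sum_{t \geq 0} \left( \sum_{k=0}^d h_k^* \binom{t+d-k}{d} \right) z^t.$$
Comparing coefficients of $z^t$ on both sides yields
$$\mathcal{L}_\pol(t) \;=\; \sum_{k=0}^d h_k^*\, \binom{t+d-k}{d}$$
for every non-negative integer $t$, which is exactly the claimed formula when written out term by term.

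Finally, since both sides are polynomials in $t$ of degree at most $d$ that agree on the infinite set $\mathbb{N}$, they coincide as polynomials in $\mathbb{Q}[t]$, and the identity holds for all $t$. There is no serious obstacle in this argument; the only subtle step is the vanishing of $\binom{x}{d}$ on $\{0, 1, \dots, d-1\}$ used in the index-shift step, which is what guarantees that the auxiliary identity for $z^k/(1-z)^{d+1}$ really begins at $z^k$ with the right polynomial coefficients.
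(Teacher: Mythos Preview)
Your argument is correct; the generating-function manipulation together with the vanishing of $\binom{x}{d}$ on $\{0,1,\dots,d-1\}$ is exactly the standard proof of this identity. Note that the paper does not supply its own proof of this lemma but simply cites it from \cite[Lemma~3.14]{beck2015computing}, so there is nothing to compare against beyond observing that your write-up matches the textbook derivation.
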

\subsection{Triangulations}\label{ssec:PrelimTriangulations}
A common method to compute $h^\ast$-vectors is \emph{via} triangulations. 
A \defn{triangulation} of a $d$-dimensional lattice polytope $\pol$ is a collection of $d$-dimensional lattice simplices $\c T$ such that:
(1) the intersection of any two simplices of $\c T$ is a common face of both; and (2) the union of all simplices of $\mathcal{T}$ is $\pol$.

A triangulation is \defn{unimodular} if all its simplices have normalized volume $1$ (\ie their Euclidean volume is $\frac{1}{d!}$). In other words, a unimodular triangulation is a triangulation into lattice simplices of smallest possible volume. 
 \Cref{fig:Example_1} shows two unimodular and one non-unimodular triangulations.

A triangulation is \defn{regular}, if it arises as the projection of the lower-hull of a lifting of its vertices.
More precisely, denoting $\b p_1, \dots, \b p_r$ the vertices of $\c T$, we say that $\c T$ is regular if there exist heights $\omega_1, \dots, \omega_r\in \R$ such that that the lower facets\footnote{A facet is a lower facet if its outer normal vector has a (strictly) negative last coordinate.} of $\conv\bigl((\b p_i, \omega_i) ~:~ i\in [r]\bigr)$ are the simplices $\conv\bigl((\b p_i, \omega_i) ~:~ i\in X\bigr)$ for all $X\subseteq [r]$ such that $\conv(\b p_i ~: ~i\in X) \in \c T$.

Finally, we say that a triangulation $\c T$ is \defn{flag} if the minimal nonfaces of its asscociated simplicial complexes have cardinality $2$. In other words, $\c T$ is completely determined by its $1$-skeleton.

\subsection{$\b s$-lecture hall simplices and $\b s$-Eulerian polynomials}
For a sequence $\b s = (s_1,\dots, s_n)$ of positive integers, the \defn{$\b s$-lecture hall simplex} $\mathcal{P}_n^{\b s}$ is defined as:
\[
\mathcal{P}_n^{\b s}\coloneqq \left\{\b x\in \R^n ~:~ 0 \leq \frac{x_1}{s_1} \leq \dots \leq \frac{x_n}{s_n}\leq 1\right\}.
\]
Alternatively, it can be expressed as the convex hull of its vertices:
\[
\mathcal{P}_n^{\b s} = \text{conv}\bigl\{(0, \dots , 0),\, (0, \dots , 0, s_n),\, \dots ,\, (s_1,\dots , s_n)\bigr\}.
\]

The $\b s$-lecture hall simplices have been originally introduced by Savage and Schuster \cite{SAVAGE2012850}, and have recently attracted a lot of attention, see e.g.,  \cite{Olsen-question,SolusAndBradden, GUSTAFSSON2020107169, FlorianAndOlsen}.
In \cite{SAVAGE2012850}, the authors show that the $h^\ast$-polynomial of $\Pn$ coincides with the following \defn{$\b s$-Eulerian polynomial}.

\begin{definition}\label{def:sInversionSequences}
For a sequence $\b s = (s_1, \dots, s_n)\in\N^n$, the \defn{$\b s$-inversion sequences} are the lattice points of the half-open parallelotope $[0 , s_1)\times \cdots  \times [0 , s_n)$; equivalently, they are elements of
\[
\mathdefn{\Is} \coloneqq \bigl\{(e_1,\ldots , e_n)\in \N^n~:~0 \leq e_i < s_i \text{ for all } i\in[n]\bigr\}.
\]

A sequence $\b e\in \Is$ has an \defn{ascent} at position $0\leq i< n$ if $ \frac{e_i}{s_i} < \frac{e_{i+1}}{s_{i+1}}$ (where $e_0 = s_0 = 1$).
The set of ascents of $\b e$ is $\mathdefn{\Asc(\b e)} \coloneq \left\{0\leq i \leq n-1~ : ~ \frac{e_i}{s_i} < \frac{e_{i+1}}{s_{i+1}}\right\}$, with $\mathdefn{\asc(\b e)} \coloneqq |\Asc(\b e)|$. 
\end{definition}

\begin{definition}\label{def:SEulereianPolynomials}
For a sequence $\b s$ of positive integers, the associated \defn{$\b s$-Eulerian polynomial} is:
$$\Epoly = \sum_{\b e\in I_n^{\b s}}t^{\asc(\b e)}\, .$$
\end{definition}

According to the convention of \Cref{def:sInversionSequences}, we have $E_{1}^{(s_1)}(t) = 1 + (s_1 - 1)\, t$, for $n = 1$.

\begin{example}\label{remark:Eulerian_numbers}
For $\b s_{\text{nat}} = (1,\ldots,n)$, the $\b s_{\text{nat}}$-Eulerian polynomial coincides with the classical \defn{Eulerian polynomial} (see \cite[Lemma 1]{SAVAGE2012850}), \ie
\[
\Epoly[\b s_{\text{nat}}] = \sum_{k = 0}^{n - 1} A(n,k)\, t^k,
\]
where the \defn{Eulerian number $A(n, k)$} is the number of permutations on $n$ elements with $k$ ``usual ascents'' (a permutation $\sigma\in\Sn$ has a usual ascent at position $i$ if $\sigma(i)<\sigma(i+1)$).
\end{example}

Classical Eulerian polynomials are known to be real-rooted, and consequently log-concave and unimodal. 
The same properties have been shown more generally for any $\b s$-Eulerian polynomial, see \cite{Savagerealroots}. 
Furthermore, 
according to \cite[Theorem 5]{SAVAGE2012850}, the $\b s$-lecture simplex and the $\b s$-Eulerian polynomial are related via the $h^*$-vector:
\begin{equation}\label{eq:Eulerian-h-star}
  h^\ast_{\Pn}(t)=  E_{n}^{\b s}(t).
\end{equation}
Consequently, the $h^\ast$-polynomial of any $\b s$-lecture hall simplex is real-rooted and, in particular, their $h^\ast$-vectors are unimodal. Unfortunately, as most real-rootedness proofs, the result from \cite{Savagerealroots} does not offer an algebraic nor geometric explanation for the unimodality of these vectors. 

\begin{example}
Let $\b s = (1,2,3)$.
Then: $I_3^{\b s} = \{ (0,0,0),\, (0,0,1),\, (0,0,2),\, (0,1,0),\, (0,1,1),\, (0,1,2) \}$.
The $h^\ast$-polynomial of $\mathcal{P}_3^{\b s}$ is given by $h^\ast_{\mathcal{P}_3^{\b s}}(t) = 1 + 4t + t^2$; while its Ehrhart polynomial is given by $\mathcal{L}_{\mathcal{P}_3^{\b s}}(t) = t^3 + 3t^2 + 3t + 1$.
\Cref{fig:Example_1} displays triangulations of $\c P_{3}^{\b s}$ with the final vertex $\b v_3 = (1,2,3)$ omitted, in order to better visualize the structure of its triangulation.  
Since this vertex contributes only a single lattice point (which gives rise to a pyramid at height $1$ over the drawing of \Cref{fig:Example_1}), the (non)unimodularity of the triangulations is preserved.


\begin{figure}[h]
    \centering
    \newcommand{\drawFilteredPoints}{
  \foreach \x in {0,...,2}{
    \foreach \y in {0,...,3}{
      \pgfmathsetmacro{\limy}{1.5*\x}
      \pgfmathparse{\y >= \limy}
      \ifnum\pgfmathresult=1
        \draw (\x,\y) node{$\bullet$};
      \fi
    }
  }
}

\begin{tikzpicture}[scale = 1]

\begin{scope}[shift={(-1, 0)}]
    \draw[-](-0.25 , 0)--(2.5 , 0);
    \draw[-](0,-0.25 )--(0 , 3.5 );

    \draw[very thick, black] (0, 0) -- (0, 3) -- (2,3) -- cycle;

    \draw[dashed, fill=red!30, opacity=0.6] (0, 1) -- (1,2) -- (0,2) -- cycle;
    \draw[dashed, fill=blue!30, opacity=0.6] (1, 2) -- (2,3) -- (1,3) -- cycle;
    \draw[dashed, fill=green!30, opacity=0.6] (0, 2) -- (1,2) -- (1,3) -- cycle;
    \draw[dashed, fill=violet!30, opacity=0.6] (0, 2) -- (0,3) -- (1,3) -- cycle;
    \draw[dashed, fill=yellow!30, opacity=0.6] (0, 0) -- (0,1) -- (1,2) -- cycle;
    \draw[dashed, fill=magenta!30, opacity=0.6] (0, 0) -- (1,2) -- (2,3) -- cycle;

    \draw (0, 0) node[below left]{$0$};
    \foreach \x in {1,...,2}{\draw (\x , 0) node[below]{$\x $};}
    \foreach \y in {1,...,3}{\draw (0, \y) node[left]{$\y $};}

    \drawFilteredPoints
\end{scope}

\begin{scope}[shift={(4, 0)}]
    \draw[-](-0.25 , 0)--(2.5 , 0);
    \draw[-](0,-0.25 )--(0 , 3.5 );

    \draw[very thick, black] (0, 0) -- (0, 3) -- (2,3) -- cycle;

    \draw[dashed, fill=red!30, opacity=0.6] (0, 1) -- (1,2) -- (0,2) -- cycle;
    \draw[dashed, fill=blue!30, opacity=0.6] (1, 2) -- (2,3) -- (1,3) -- cycle;
    \draw[dashed, fill=green!30, opacity=0.6] (0, 2) -- (1,2) -- (0,3) -- cycle;
    \draw[dashed, fill=violet!30, opacity=0.6] (0, 3) -- (1,2) -- (1,3) -- cycle;
    \draw[dashed, fill=yellow!30, opacity=0.6] (0, 0) -- (0,1) -- (1,2) -- cycle;
    \draw[dashed, fill=magenta!30, opacity=0.6] (0, 0) -- (1,2) -- (2,3) -- cycle;

    \draw (0, 0) node[below left]{$0$};
    \foreach \x in {1,...,2}{\draw (\x , 0) node[below]{$\x $};}
    \foreach \y in {1,...,3}{\draw (0, \y) node[left]{$\y $};}

    \drawFilteredPoints
\end{scope}

\begin{scope}[shift={(9, 0)}]
    \draw[-](-0.25 , 0)--(2.5 , 0);
    \draw[-](0,-0.25 )--(0 , 3.5 );

    \draw[very thick, black] (0, 0) -- (0, 3) -- (2,3) -- cycle;

    \draw[dashed, fill=blue!30, opacity=0.6] (1, 2) -- (2,3) -- (0,3) -- cycle;
    \draw[dashed, fill=green!30, opacity=0.6] (0, 1) -- (1,2) -- (0,3) -- cycle;
    \draw[dashed, fill=yellow!30, opacity=0.6] (0, 0) -- (0,1) -- (1,2) -- cycle;
    \draw[dashed, fill=magenta!30, opacity=0.6] (0, 0) -- (1,2) -- (2,3) -- cycle;

    \draw (0, 0) node[below left]{$0$};
    \foreach \x in {1,...,2}{\draw (\x , 0) node[below]{$\x $};}
    \foreach \y in {1,...,3}{\draw (0, \y) node[left]{$\y $};}

    \drawFilteredPoints
\end{scope}

\end{tikzpicture}

\vspace{-0.8cm}
    \caption{(Left \& Middle) Two different unimodular triangulations of $\c P_3^{\b s}$ without its vertex  $\b v_3 = (1, 2, 3)$.
    (Right) A non-unimodular triangulation for $\c P_3^{\b s}$ without its vertex  $\b v_3 = (1, 2, 3)$.}
    \label{fig:Example_1}
\end{figure}
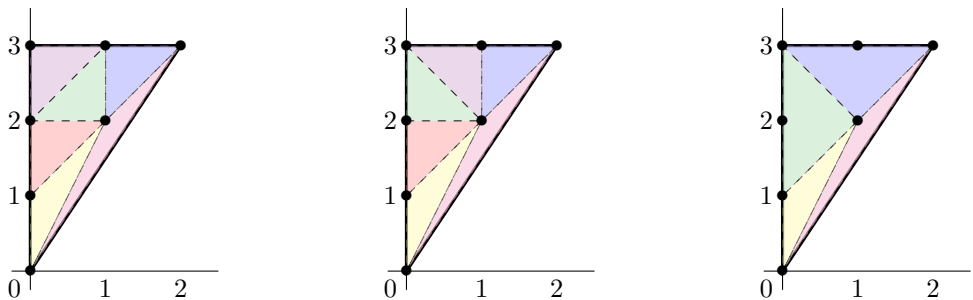
\end{example}

\subsection{Triangulating one-point extensions}\label{ssec:PrelimPointExtension}

Let $\pol\subseteq \R^m$ be an $d$-dimensional lattice polytope, defined as the convex hull of the vertices $\b v_0, \dots, \b v_{n}$ and let $\b x\in\R^{m}$ be a point not contained in the affine hull of $\pol$. 
 The \defn{pyramid  $\Pyr(\pol,\b x)$ over $\pol$} (with apex $\b x$) is the convex hull of the  $\pol$ and $\b x$, \ie $\conv(\pol\cup\{\b x\})$. We will also write $\Pyr\pol$ if the apex is clear from the context. 
 
It is well-known (see \cite[page 36]{beck2015computing}) that, if $\b x $ is at lattice distance $1$ from the affine hull of $\pol$, then the number of integers points contained in the dilation $t\cdot\Pyr(\pol)$ is given by:
\begin{equation}\label{Ehrhart: Pyramid}
\mathcal{L}_{\Pyr(\pol)}(t) = 1 + \mathcal{L}_{\pol}(1) + \cdots + \mathcal{L}_{\pol}(t) = \sum_{j = 0}^{t} \mathcal{L}_{\pol}(j).
\end{equation}

In order to also deal with the case where $\b x$ lies in the affine hull of $\pol$, we will apply the pyramid idea not only to $\pol$ but rather to some facets $\polytope{F}$ of~$\pol$. 
More precisely, we define:
\begin{definition}
For $\pol$ a lattice polytope of dimension and a lattice point $\b x\notin \pol$:
\begin{compactenum}
    \item The \defn{one-point extension} of $\pol$ by $\b x$ is the polytope $\conv(\pol\cup\{\b x\})$;
    \item A face of $\pol$ is \defn{visible from $\b x$} if $\conv(\polytope{F}\cup\{\b x\}) \cap \pol = \polytope{F}$.
    We denote by \defn{$\c F_{\b x}$} the collection of facets of $\pol$ visible from $\b x$.
\end{compactenum}
\end{definition}

\begin{theorem}\label{thm:One_point_triangulation}
Let $\pol$ a $d$-dimensional lattice polytope.
Then we have the following

\begin{compactenum}
\item\label{item:thm:One_point_triangulation1} Let $\c T$ be a triangulation of $\pol$ and let $\b x\notin \pol$.  
Let $\c T\left|_{\c F_{\b x}}\right. \coloneqq \{\simplex\cap\polytope{F} ~:~ \simplex\in \c T,\, \polytope{F}\in \c F_{\b x}\}$.
Then $$\widetilde{\c T} \coloneqq \c T \cup \{\conv(\polytope{S}\cup\{\b x\}) ~:~ \polytope{S}\in \c T\left|_{\c F}\right.\}$$ is a triangulation of the one-point extension $\conv(\pol\cup\{\b x\})$.

\item\label{item:thm:One_point_triangulation2} If $\c T$ is regular (respectively flag), then $\widetilde{\c T}$ is also regular (respectively flag).
\end{compactenum}
\end{theorem}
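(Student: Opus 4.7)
The plan for claim (\ref{item:thm:One_point_triangulation1}) combines a geometric decomposition with a case analysis of intersections. First I would establish the visibility decomposition
\[
\conv(\pol \cup \{\b x\}) \;=\; \pol \;\cup\; \bigcup_{\polytope{F} \in \c F_{\b x}} \conv(\polytope{F}\cup\{\b x\}),
\]
using that every point in $\conv(\pol\cup\{\b x\}) \setminus \pol$ lies on a segment from $\b x$ to $\pol$ whose last exit from $\pol$ crosses a visible facet. Combined with the fact that $\c T$ triangulates $\pol$ and that, for each visible $\polytope{F}$, coning $\b x$ over the restriction $\c T|_{\polytope{F}}$ triangulates the pyramid $\conv(\polytope{F}\cup\{\b x\})$ (the apex $\b x$ sitting off the affine hull of $\polytope{F}$, as visibility forces $\b x$ to the strict outer side of $\polytope{F}$'s hyperplane), this yields a cover of the one-point extension by $\widetilde{\c T}$.

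The properness of intersections then splits into three cases. Two elements of $\c T$ meet along a common face by the triangulation property of $\c T$. Two pyramids $\conv(\polytope{S}\cup\{\b x\})$ and $\conv(\polytope{S}'\cup\{\b x\})$ meet in $\conv((\polytope{S}\cap\polytope{S}')\cup\{\b x\})$, which is a common face since $\c T|_{\c F_{\b x}}$ inherits the simplicial-complex structure from $\c T$. The remaining mixed case, $\simplex\cap\conv(\polytope{S}\cup\{\b x\})$, is the only one that genuinely uses the visibility hypothesis: from $\conv(\polytope{F}\cup\{\b x\}) \cap \pol = \polytope{F}$ one deduces $\conv(\polytope{S}\cup\{\b x\}) \cap \simplex = \polytope{S}\cap\simplex$, which is a common face by the triangulation property of $\c T$.

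For claim (\ref{item:thm:One_point_triangulation2}), regularity is proved by extending a height function $\omega$ witnessing regularity of $\c T$ with a sufficiently large value $\omega_{\b x}$ at $\b x$, and verifying that in the lifted polytope the original lower facets persist (they remain lower facets because $(\b x,\omega_{\b x})$ lies above the hyperplanes supporting them, so they still project to $\c T$), while the new lower facets incident to $(\b x, \omega_{\b x})$ project precisely to the cones over $\c T|_{\c F_{\b x}}$ (the base facets visible from $\b x$ in $\R^m$ correspond exactly to the lifted facets visible from $(\b x, \omega_{\b x})$). For flagness, I would classify the minimal non-faces of $\widetilde{\c T}$: those avoiding $\b x$ are inherited from the flag complex $\c T$ and thus of size $2$, whereas a minimal non-face of the form $\{\b x\}\cup X$ would require $X$ to be a simplex of $\c T$ with every proper subset contained in some visible facet while $X$ itself is not. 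The main obstacle I expect is precisely forcing $|X|=1$ in this last case: it amounts to a flag-preservation property of the subcomplex $\c T|_{\c F_{\b x}}$ stemming from the flagness of $\c T$, but a priori a simplex of $\c T$ can span several visible facets, so ruling this out cleanly is the most delicate combinatorial step.
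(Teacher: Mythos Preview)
Your plan for part~(1) and for the regularity half of part~(2) is exactly the paper's argument: the same visibility decomposition of $\conv(\pol\cup\{\b x\})$ and the same large-height extension of a regular lifting $\omega$. If anything you are more careful than the paper, which does not spell out the three-case intersection check you propose.

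Your hesitation on flagness is warranted, and in fact it points to a genuine gap. The paper simply asserts that every minimal non-face of $\widetilde{\c T}$ containing $\b x$ has the form $\{\b x,v\}$ with $v$ lying in no visible facet, without further argument. But take $\pol$ to be the standard $3$-simplex on $a=\b 0$, $b=\b e_1$, $c=\b e_2$, $d=\b e_3$ with the trivial (flag) triangulation $\c T=\{\pol\}$, and $\b x=(1,-2,-2)$, which sees exactly the two facets $\{a,b,c\}$ and $\{a,b,d\}$. Then $\{\b x,c\}$, $\{\b x,d\}$ and $\{c,d\}$ are all edges of $\widetilde{\c T}$, yet $\{\b x,c,d\}$ is contained in none of the three maximal simplices $\{a,b,c,d\}$, $\{\b x,a,b,c\}$, $\{\b x,a,b,d\}$: it is a minimal non-face of size~$3$, so $\widetilde{\c T}$ is not flag. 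The obstacle you anticipated---a face of $\c T$ whose vertices straddle several visible facets without lying in any single one---is exactly what occurs.

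What saves the paper is that every downstream use of this theorem (the one-point extensions of $\b s$-lecture hall simplices in Section~\ref{Section:Regular_Unimodular}) has a \emph{single} visible facet~$\polytope{F}$. In that restricted setting your worry dissolves: if $\{\b x\}\cup X$ were a minimal non-face with $|X|\geq 2$, then each vertex of $X$ lies in $\polytope{F}$, so the face $X$ of $\c T$ lies in the convex set $\polytope{F}$, hence $X$ is a face of some $\simplex\cap\polytope{F}\in\c T|_{\c F_{\b x}}$, and $\{\b x\}\cup X$ is a face of $\widetilde{\c T}$ after all. So rather than attempting the general statement, you may simply add the hypothesis $|\c F_{\b x}|=1$ and the flagness argument becomes one line.
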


\begin{proof}
(1) Note that  $\pol = \bigcup_{ \simplex \in \c T}  \simplex$ for $\c T$ a triangulation of $\pol$.
Then by definition of $\c F_{\b x}$, we have that $\conv(\pol \cup \{\b x\}) = \pol \cup \bigcup_{ \polytope{F}\in\c F } \conv(\polytope{F} \cup \b x)$.
For any $\polytope{S}\in \c T\left|_{\c F_{\b x}}\right. $, we have that $\conv (\polytope{S} \cup \{\b x\})$ is a simplex, specifically, $\conv (\polytope{S} \cup \{\b x\}) = \Pyr(\polytope{S}, \b x)$.
Hence, by definition of visible facets, $\widetilde{\c T}$ defines a triangulation of $\conv(\pol \cup \{\b x\})$ with $\c T\subseteq\widetilde{\c T}$. Indeed, 
\[
\conv(\pol \cup \{\b x\}) = \bigcup_{\simplex\in \c T} \simplex ~~ \cup~~ \bigcup_{\polytope{S}\in \c T\left|_{\c F}\right.} \conv(\polytope{S} \cup \{\b x\}).
\]

(2) Let $\widetilde{\pol}=\conv(\simplex \cup \{\b x\})$. 
First we prove $\widetilde{\c T}$ is a regular triangulation. 
Pick a height vector $\b \omega \in \R^{V(\c T)}$ defined on the vertices $V(\c T)$ of the triangulation $\c T$, and consider $\b \omega' \in \R^{V(\c T)\cup\{\b x\}}$ defined by $\omega'_{\b a}~=~\omega_{\b a}$ for any $\b a\in \pol \cap V(\c T) $, and $\omega'_{\b x}$ taken arbitrarily large.
We set \linebreak $\pol_{\b \omega} \coloneqq \conv \left\{ 
\begin{pmatrix}
\b a \\
\omega_{\b a}
\end{pmatrix}
: \b a \in V(\c T)
\right\}$, and $\widetilde{\pol}_{\b \omega} \coloneqq \conv \left\{ 
\begin{pmatrix}
\b a \\
\omega_{\b a}
\end{pmatrix}
: \b a \in V(\c T) \cup \{\b x\}
\right\}$.
Pick  a lower facet $\polytope{G}$ of  $\pol_{\b \omega}$, then $\polytope{G}$ defines a half-space containing $\pol_{\b \omega}$ and supporting $\polytope{G}$.
As $\omega_{\b x}'$ is arbitrarily large (and $\polytope{G}$ is a lower facet), the point $\begin{pmatrix}
\b x \\
\omega'_{\b x}
\end{pmatrix}$ lies in the interior of this half-space.
Hence $\polytope{G}$ is also a facet of $\widetilde{\pol}_{\b \omega}$.
Besides, by definition of visibility, the other lower facets of $\widetilde{\pol}_{\b \omega}$ are formed by the convex hull of $\begin{pmatrix} \b x \\ \omega'_{\b x} \end{pmatrix}$ together with a facet of $\conv\left\{ 
\begin{pmatrix}
\b a \\
\omega'_{\b a}
\end{pmatrix}
: \b a \in \polytope{F} \cap V(\c T)
\right\}$ for some $\polytope{F}\in \c F$.
Thus, the triangulation $\widetilde{\c T}$ is regular.


It remains to prove that $\widetilde{\c T}$ is a flag triangulation of $\widetilde{\pol}$ if $\c T$ is flag. 
For this aim, let $\Delta_{\widetilde{\c T}}$ be the simplicial complex, associated to $\widetilde{\c T}$, and, similarly, define $\Delta_{\c T}$. By definition of $\widetilde{T}$, a minimal non-face of $\Delta_{\widetilde{\c T}}$ is either a minimal non-face of $\Delta_{c T}$ or it consists of  $\b x$ and exactly one vertex not lying in any visible facet in $\c F$. In particular, all minimal non-faces have cardinality $2$ and hence, $\widetilde{\c T}$ is flag.
\end{proof}

\section{Ehrhart non-positivity}\label{Section:Ehrhart_nonpositive}

Recall that for a sequence $\b s = (s_1,\dots, s_n)\in\N^n$, the \defn{$\b s$-lecture hall simplex} $\Pn$ is defined as:
$$\mathcal{P}_n^{\b s} = \left\{\b x\in \R^n ~:~ 0 \leq \frac{x_1}{s_1} \leq \dots \leq \frac{x_n}{s_n}\leq 1\right\}.$$

It is known that there exist sequences $\b s$ for which $\mathcal{P}_n^{\b s}$ is Ehrhart positive, and others for which it is \emph{not} Ehrhart positive.
A classical example of Ehrhart positivity is the sequence $\b s = (1, \ldots, n)$, where $\mathcal{L}_{\mathcal{P}_n^{\b s}}(t) = (t + 1)^n$, which clearly has only  positive coefficients (see \cite[Corollary 1]{SAVAGE2012850}).

On the other hand, in the negative direction, Liu and Solus showed in \cite[Theorem~4.3, Corollary~4.5]{LiuAndSolus2019} that for certain choices of positive integers $a, b, k_1, k_2, k_3$, the $\b s$-lecture hall simplex associated with the sequence $\b s = (\underbrace{1,\dots,1}_{k_1}, a, \underbrace{1,\dots,1}_{k_2}, b, \underbrace{1,\dots,1}_{k_3})$ is not Ehrhart positive.
Motivated by this example, Olsen \cite[Question 6.8]{Olsen-question} asked for conditions on  $\b s$ that ensure that $\mathcal{P}_n^{\b s}$ is Ehrhart positive or that $\Pn$ is not Ehrhart positive. 

The goal of this section is to provide numerous cases of Ehrhart non-positivity.
In particular, in \Cref{The:Negativity} we show that the $\b s$-lecture hall simplex is not Ehrhart positive for $\b s = (a,\dots, a,a+1)$ for any $a$ large enough (provided that the length of $\b s$ is at least $5$; see \Cref{thm:positivity} for the Ehrhart positivity when the length is $\leq 4$).
To us, this seems unexpected, since the corresponding $\b s$-lecture hall simplex is as close to (a unimodular transformation of) the dilated standard simplex (corresponding to the constant sequence $\b a = (a,\dots, a)$) as it could possibly be, and the latter one is renown to be Ehrhart positive (its Ehrhart polynomial is $\c L_{\c P^{\b a}_n}(t) = \binom{at+n}{n}$).
Moreover, since the $\b s$-lecture hall simplex associated with $\b s = (a, \ldots , a , a + 1)$ admits a flag, regular, and unimodular triangulation by \cite[Corollary~1]{SolusAndBradden}, one might have expected other nice properties.

We now describe our central strategy to prove \Cref{The:Negativity}.
First we split the $h^\ast$-polynomial of $\Pn$ into a sum of the (shifted) $h^\ast$-polynomials of two simpler $\b s$-lecture hall simplices, see \Cref{prop:recursion_star}.
This relation allows us to express the Ehrhart polynomial of $\Pn$ (for $\b s=(a,\dots,a,a+1)$) as the sum of:
$(i)$ the Ehrhart polynomial associated with the $a$\textsuperscript{th} dilation of the standard simplex, and $(ii)$ a correction term $\mathcal{R}(t)$ defined in \Cref{Thm:Conjecture_prove}.
Looking at the coefficient of both polynomials in front of $t^{n-4}$ as $a\to +\infty$, we prove that the coefficient in $(ii)$ grows faster than the one of $(i)$, and tends to $-\infty$.
Consequently, for any $a$ large enough, the coefficient in front of $t^{n-4}$ in the Ehrhart polynomial $\mathcal{L}_{\Pn}(t)$ is also negative, where $\b s=(a,\dots,a,a+1)$ and $n\geq 5$.

We start by introducing some notation. 
In the following, for a polynomial $P(t)$ in the variable~$t$ we use $[t^\ell]P(t)$ to denote the coefficient in front of $t^\ell$.
Given a sequence $\b s' = (s_1,\dots, s_{n-1})$ of positive integers of length $n-1$ and a positive integer $s_n$, we write $(\b s', s_{n})$ for the sequence $\b s=(s_1,\dots,s_{n-1},s_n)$ of length $n$.
With this notation at hand we can provide the desired splitting of the $h^\ast$-polynomial of an $\b s$-lecture hall simplex:

\begin{proposition}\label{prop:recursion_star}
Let $\b s = (s_1,\dots, s_n, s_n + 1)$ and $ {\b s'} = (s_1,\dots,s_{n-1}, s_n)$.
Then\footnote{Be careful: $\b s$ is of length $n+1$, whereas $\b s'$ is of length $n$.}:
$$h_{\mathcal{P}_{n+1}^{\b s}}^*(t) = h_{\mathcal{P}_{n+1}^{(\b s',s_n)}}^*(t)  \,+\,  t\cdot  h_{\mathcal{P}_{n}^{\b s'}}^*(t).$$
\end{proposition}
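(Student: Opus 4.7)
The plan is to translate the claim into a combinatorial identity about $\b s$-Eulerian polynomials via \eqref{eq:Eulerian-h-star}, which gives $h_{\mathcal{P}_n^{\b s}}^*(t) = E_n^{\b s}(t)$ for any sequence $\b s$. With this in hand, the proposition becomes the identity
\[
E_{n+1}^{\b s}(t) \;=\; E_{n+1}^{(\b s', s_n)}(t) \;+\; t\cdot E_n^{\b s'}(t),
\]
where $\b s=(s_1,\dots,s_n,s_n+1)$ and $(\b s',s_n)=(s_1,\dots,s_{n-1},s_n,s_n)$. This is an equality of generating functions on inversion sequences, and I would attack it by partitioning $I_{n+1}^{\b s}$ according to the value of the last coordinate $e_{n+1}\in\{0,1,\dots,s_n\}$.

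First I would set $A=\{\b e\in I_{n+1}^{\b s} : e_{n+1}<s_n\}$ and $B=\{\b e\in I_{n+1}^{\b s} : e_{n+1}=s_n\}$. The set $A$ coincides as a set of integer points with $I_{n+1}^{(\b s',s_n)}$, while $B$ is in natural bijection with $I_n^{\b s'}$ by forgetting the last coordinate. The crux will be to match the ascent statistics. For ascents at positions $0,\dots,n-1$ nothing depends on $s_{n+1}$, so these coincide in all three sequences. The entire subtlety lies in position $n$, where the denominator $s_{n+1}$ differs between $\b s$ and $(\b s',s_n)$ (respectively $s_n+1$ and $s_n$), and in $\b s'$ this position does not exist.

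The key algebraic step is the equivalence
\[
\frac{e_n}{s_n} < \frac{e_{n+1}}{s_n+1} \;\Longleftrightarrow\; e_n < e_{n+1}, \qquad \text{for } 0\leq e_n,e_{n+1}<s_n,
\]
which I would verify by cross-multiplying: $(\Leftarrow)$ holds because $e_{n+1}\geq e_n+1$ gives $(e_n+1)s_n>e_n(s_n+1)$ iff $s_n>e_n$, which is true; and $(\Rightarrow)$ holds because $e_n\geq e_{n+1}$ gives $e_n/s_n\geq e_{n+1}/s_n>e_{n+1}/(s_n+1)$. Consequently, for $\b e\in A$ the ascent at position $n$ is present in $\b s$ iff it is present in $(\b s',s_n)$, so $\asc_{\b s}(\b e)=\asc_{(\b s',s_n)}(\b e)$. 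For $\b e\in B$ one has $e_{n+1}/s_{n+1}=s_n/(s_n+1)$, and the inequality $e_n(s_n+1)<s_n^2$ follows directly from $e_n\leq s_n-1$, so position $n$ is \emph{always} an ascent; thus $\asc_{\b s}(\b e) = \asc_{\b s'}(\b e')+1$ where $\b e'$ is $\b e$ with $e_{n+1}$ dropped.

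Summing then yields
\[
E_{n+1}^{\b s}(t) = \sum_{\b e\in A} t^{\asc_{(\b s',s_n)}(\b e)} + \sum_{\b e\in B} t^{\asc_{\b s'}(\b e')+1} = E_{n+1}^{(\b s',s_n)}(t) + t\cdot E_n^{\b s'}(t),
\]
which, combined with \eqref{eq:Eulerian-h-star}, proves the proposition. The only real obstacle is the position-$n$ ascent comparison, which is a short but careful inequality check; everything else is bookkeeping based on the natural partition of inversion sequences by their last entry.
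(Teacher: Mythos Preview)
Your proof is correct and follows essentially the same route as the paper: translate via \eqref{eq:Eulerian-h-star} to $\b s$-Eulerian polynomials, then split $I_{n+1}^{\b s}$ according to whether $e_{n+1}<s_n$ or $e_{n+1}=s_n$. The paper's proof is terser and does not explicitly verify that the ascent at position $n$ agrees between $\b s$ and $(\b s',s_n)$ on the set $A$; your cross-multiplication argument fills in precisely this detail, so your write-up is in fact more complete than the original.
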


\begin{proof}
Fix $\b s$.
By \Cref{eq:Eulerian-h-star}, we need to show that 
$$E_{n + 1}^{\b s}(t) = E_{n + 1}^{(\b s',s_{n})}(t) + t\cdot E_{n}^{\b s'}(t)$$

By definition, we have $E_{n+1}^{\b s}(t) = \sum_{\b e\in I_{n + 1}^{\b s}}t^{\asc(e)}$. 

Let $I_{n + 1}^{s_n}\coloneqq  \{ (e_1,\dots, e_{n + 1})~:~ 0\leq e_i < s_i ~ \text{ for } 1\leq i\leq n\text{ and } ~ e_{n + 1} = s_{n} \}$ and recall that 
$I_{n + 1}^{(\b s',s_n)} = \{ (e_1,\dots, e_{n + 1})~:~ 0\leq e_i < s_i ~ \text{ and } ~ e_{n + 1} < s_{n} \}$. 
Using these two disjoint sets, we can decompose the set of inversion sequences for $\b s$ as follows:
$I_{n + 1}^{\b s} =I_{n + 1}^{s_n}\,\sqcup\, I_{n + 1}^{(\b s',s_n)}$.

Since inversion sequences in $I_{n + 1}^{s_n}$ always have an ascent at the position $n$, regardless of the ascents at the other positions, we have 
\[
\sum_{\b e\in I_{n + 1}^{s_n}} t^{\asc(e)} = \sum_{\b e\in I_{n}^{\b s'}} t^{\asc(e) + 1}=t\cdot E_n^{\b s'}(t).
\]
This yields the desired formula:
\[
    E_{n + 1}^{\b s}(t) = \sum_{\b e\in I_{n + 1}^{\b s}} t^{\asc(e)} = \sum_{\b e\in I_{n + 1}^{(\b s',s_n)}} t^{\asc(e)} + \sum_{\b e\in I_{n}^{\b s'}} t^{\asc(e) + 1} 
    = E_{n + 1}^{(\b s',s_n)}(t) + t\cdot E_{n}^{\b s'}(t)
\qedhere\]
\end{proof}

We are now ready to present the first significant result of this section.

\begin{theorem}\label{thm:positivity}
For all $\b s = (a, \dots, a, a+1)$ of length $n\leq 4$ with $a\geq 1$, the $\b s$-lecture hall simplex $\mathcal{P}_n^{\b s}$ is Ehrhart-positive.
\end{theorem}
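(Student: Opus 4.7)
The plan is to derive an explicit closed-form expression for $\mathcal{L}_{\Pn}(t)$ when $\b s = (a,\dots,a,a+1)$, and then verify positivity of every coefficient for $n\in\{1,2,3,4\}$ by direct polynomial expansion.

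For $n=1$ one has $\b s = (a+1)$, so $\Pn$ is the interval $[0,a+1]$ and $\mathcal{L}_{\Pn}(t) = (a+1)t+1$, which is trivially Ehrhart-positive. For $n\geq 2$, the first step is to apply \Cref{prop:recursion_star} with $\b s' = (a,\dots,a)$ of length $n-1$, obtaining
$$h^\ast_{\Pn}(t) = h^\ast_{\mathcal{P}_n^{(a,\dots,a)}}(t) + t\cdot h^\ast_{\mathcal{P}_{n-1}^{(a,\dots,a)}}(t).$$
Since $\mathcal{P}_n^{(a,\dots,a)}$ is unimodularly equivalent to the $a$\textsuperscript{th} dilate of the standard $n$-simplex, its Ehrhart polynomial is $\binom{at+n}{n}$. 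Translating the above $h^\ast$-identity back to Ehrhart polynomials via \Cref{Lemma:Beck_book} and recognizing that multiplication by $t$ corresponds to shifting the pyramid formula of Equation~\eqref{Ehrhart: Pyramid} by one, one obtains the clean expression
$$\mathcal{L}_{\Pn}(t) = \binom{at+n}{n} + \sum_{j=0}^{t-1}\binom{aj+n-1}{n-1}.$$

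With this formula in hand, I would expand, for each $n\in\{2,3,4\}$, both summands as polynomials in $t$ using the standard Faulhaber formulas for $\sum_{j=0}^{t-1}j^k$ with $0\leq k\leq n-1$, and then check that every coefficient of $t^i$ is a polynomial in $a$ with non-negative integer coefficients for $a\geq 1$. For example, for $n=2$ one quickly obtains $\mathcal{L}_{\mathcal{P}_2^{(a,a+1)}}(t) = \tfrac{a(a+1)}{2}t^2 + (a+1)t + 1$, and analogous factorisations over $\mathbb{Z}_{\geq 0}[a]$ (such as $\tfrac{a(a+1)(a+22)}{24}$ for the coefficient of $t^2$ when $n=4$) should appear for the other coefficients when $n=3$ and $n=4$.

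The only real obstacle is the bookkeeping of the polynomial expansion for $n=3$ and $n=4$; there is no conceptual subtlety here, because the ``dangerous'' coefficient of $t^{n-4}$ identified in \Cref{thm:A}, which becomes negative for $n\geq 5$, is either out of range ($n\leq 3$) or coincides with the constant term ($n=4$), which always equals $1$. Hence no harmful cancellation between $\binom{at+n}{n}$ and the correction sum can occur in these low-dimensional cases, and Ehrhart positivity is guaranteed once the finite expansion is carried out.
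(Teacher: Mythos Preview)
Your proposal is correct and follows essentially the same route as the paper: both derive the identity
\[
\mathcal{L}_{\Pn}(t) \;=\; \binom{at+n}{n} \;+\; \sum_{j=0}^{t-1}\binom{aj+n-1}{n-1}
\]
and then verify positivity of each coefficient for $n\in\{3,4\}$ by direct expansion (the paper disposes of $n\le 2$ using that the constant, leading, and second-leading coefficients are always non-negative). The only minor difference is how this identity is obtained: the paper argues geometrically, writing $\Pn$ (up to translation) as $\mathcal{P}_n^{\b a}$ glued to $\Pyr(\polytope{F},-\b e_n)$ for the facet $\polytope{F}\cong\mathcal{P}_{n-1}^{\b a}$, whereas you go through \Cref{prop:recursion_star} and \Cref{Lemma:Beck_book} --- which is precisely the algebraic translation the paper uses later in the proof of \Cref{The:Negativity}. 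Your factorisation $\tfrac{a(a+1)(a+22)}{24}$ for $[t^2]$ at $n=4$ matches the paper's $\tfrac{1}{24}a^3+\tfrac{23}{24}a^2+\tfrac{11}{12}a$.
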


\begin{proof}
Recall that the constant coefficient of the Ehrhart polynomial is always $1$, and the highest and second highest coefficient of the Ehrhart polynomial account for the volume and the normalized surface area, respectively. 
These coefficients are non-negative, and the claim follows for $n\leq 2$.

Now let $n\in \{3,4\}$ and let $\b a=(a,\dots,a)$ denote a constant sequence (of length $n$ or $n-1$). Moreover, let $\polytope{F}$ be the facet of $\mathcal{P}_n^{\b a}$ given by the hyperplane $\{\b x\in \R^n~:~x_{n-1}=x_n\}$.
The key observation is that this facet is unimodularly equivalent to $\c P_{n-1}^{\b a}$. 
Furthermore, note that $\mathcal{P}_n^{\b s}$ is (the translation by $\b e_n$ of) the union of $\mathcal{P}_n^{\b a}$ and the pyramid over the facet $\polytope{F}$ with apex $-\b e_n$, that is, $\mathcal{P}_n^{\b s} = \b e_n + \bigl(\mathcal{P}_n^{\b a}\, \cup\, \Pyr(\polytope{F},-\b e_n)\bigr)$. 
Now taking $\polytope{Q} = \mathcal{P}_n^{\b a}\, \cup\, \Pyr(\polytope{F},-\b e_n)$, we have that
\begin{align*}
    \mathcal{L}_{\polytope{Q}}(t) &= \mathcal{L}_{\mathcal{P}_n^{\b a}}(t) + \mathcal{L}_{\Pyr(\polytope{F}, -\b e_n)}(t) - \mathcal{L}_{\mathcal{P}_n^{\b a}\, \cap \, \Pyr(\polytope{F}, -\b e_n)}(t)\\
     &= \mathcal{L}_{\mathcal{P}_n^{\b a}}(t) + \mathcal{L}_{\Pyr(\polytope{F}, -\b e_n)}(t-1).
\end{align*}

Now using the fact that the facet $\polytope{F}$ is unimodularly equivalent to $P_{n-1}^{\b a}$, we have that
\[
\mathcal{L}_{\mathcal{P}_n^{\b s}}(t) = \mathcal{L}_{\mathcal{P}_n^{\b a}}(t) + \mathcal{L}_{\polytope{Pyr}(\mathcal{P}_{n-1}^{\b a},-\b e_{n-1})}(t - 1)
\]
    Since $\mathcal{P}_n^{\b a}$ is unimodular equivalent to the $a$\textsuperscript{th} dilation of the standard $n$-simplex, we get $\mathcal{L}_{\mathcal{P}_n^{\b a}}(t) = \binom{at + n}{n}$. 
    Using this and \eqref{Ehrhart: Pyramid}, we get
    \begin{equation}\label{eq:crucial}
    \mathcal{L}_{\mathcal{P}_n^{\b s}}(t) = \binom{at + n}{n} + \sum_{\ell=0}^{t-1}\binom{a\ell+n}{n}
\end{equation}    
    For $n = 3$, the only critical coefficient of $\mathcal{L}_{\Pn}$ is the coefficient of $t$ and it easily follows from \eqref{eq:crucial} that 
$[t]\mathcal{L}_{\mathcal{P}_3^{\b s}}(t) = \frac{1}{12}a^2 + \frac{13}{12}a + 1 \geq 0$ for $a\geq 1$.

Similarly, for $n = 4$, the critical coefficients appear at $t$ and $t^2$ and it is easily seen from \eqref{eq:crucial} that $[t]\mathcal{L}_{\mathcal{P}_4^{\b s}}(t) = \frac{1}{6}a^2 + \frac{7}{6}a + 1\geq 0$ and  $[t^2]\mathcal{L}_{\mathcal{P}_4^{\b s}}(t) = \frac{1}{24}a^3 + \frac{23}{24}a^2 + \frac{11}{12}a\geq 0$ for $a\geq 1$.
\end{proof}
Though one might hope that \Cref{thm:positivity} holds for sequences $\b s = (a,\dots,a,a+1)$ of length $n\geq 5$, the following example contradicts this belief.

\begin{example}\label{ex:negative}
It can be checked computationally, for $a\leq 15$ and $\b s=(a,a,a,a,a+1)$, that the simplex $\Pn$ is Ehrhart positive. However, for $a=16$, we have a negative linear coefficient in:
\[
\mathcal{L}_{\Pn}(t) = \frac{139264}{15}t^5 + \frac{21760}{3}t^4 + \frac{9248}{3}t^3 + \frac{2210}{3}t^2 \textcolor{red}{- \frac{119}{15}}t + 1\]

There is computational evidence that led us to expect that this behavior continues to hold for $a > 16$.
In \Cref{The:Negativity}, we prove that this is indeed the case for all $n\geq 5$: there is a certain threshold, such that for all $a$ bigger than this threshold, the polynomial $\c L_{\Pn}(t)$ for $\b s = (a, \dots, a, a+1)$ has a negative coefficient on $t^{n-4}$.
This motivates the following conjecture:
\end{example}

\begin{conjecture}
For $n\geq 5$, we conjecture that if $\c P_n^{(a, \dots, a, a+1)}$ is Ehrhart non positive for some $a\in \N$, then for all $b\geq a$ the $\b s$-lecture hall simplex $\c P_n^{(b, \dots, b, b + 1)}$ is also Ehrhart non positive.
\end{conjecture}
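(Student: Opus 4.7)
The plan is to study, for each $n\geq 5$ and each $k\in\{0,\dots,n\}$, the function $f_{n,k}(a) := [t^k]\c L_{\c P_n^{(a,\dots,a,a+1)}}(t)$ as a polynomial in $a$, and to show that the set $\{a\in\N : f_{n,k}(a) < 0\}$ is upward-closed in $\N$. The conjecture follows immediately, since the Ehrhart-non-positivity set of $\c P_n^{(a,\dots,a,a+1)}$ is the finite union over $k$ of these sets.

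First I would derive an explicit closed form for $f_{n,k}(a)$. The pyramid decomposition from the proof of \Cref{thm:positivity} extends verbatim to arbitrary $n$ and yields
\[
\c L_{\c P_n^{(a,\dots,a,a+1)}}(t) = \binom{at+n}{n} + \sum_{\ell=0}^{t-1}\binom{a\ell+n-1}{n-1}
\]
(incidentally, this corrects a typographical slip in~\eqref{eq:crucial}, whose inner binomial should read $\binom{a\ell+n-1}{n-1}$ since it counts lattice points in dilates of the facet $\polytope{F}$, which is unimodularly equivalent to $\c P_{n-1}^{\b a}$). Combining with Faulhaber's formula for $\sum_{\ell=0}^{t-1}\ell^j$, each coefficient $f_{n,k}(a)$ becomes an explicit polynomial in $a$ whose coefficients are products of elementary symmetric polynomials $e_i(1,\dots,n-1)$ and Bernoulli numbers $B_i$.

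Next I would apply Descartes' rule of signs to $f_{n,k}(a)$. For the key case $k=n-4$, only four coefficients are nonzero---those at $a^{n-5}, a^{n-4}, a^{n-3}, a^{n-1}$---since the $a^{n-2}$ coefficient vanishes because $B_3=0$. A direct calculation using the recurrence $e_4(1,\dots,n) = e_4(1,\dots,n-1) + n\, e_3(1,\dots,n-1)$ shows their signs are $(+,+,+,-)$ (from lowest to highest power), exhibiting exactly one sign change. Hence $f_{n,n-4}(a)$ has a unique positive real root, and since its leading coefficient is negative (by \Cref{thm:A}), $f_{n,n-4}(a) < 0$ beyond that root---proving upward-closedness of the set $\{a : f_{n,n-4}(a) < 0\}$. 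An analogous argument should handle the other values of $k$ for which $f_{n,k}(a)$ is eventually negative (namely $k=n-5$ for $n\geq 6$, and more generally $k=n-2m$ with $B_{2m} < 0$), by exploiting similar telescoping identities for elementary symmetric polynomials together with the alternating sign structure of Bernoulli numbers.

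The main obstacle is the uniform statement above: verifying that for \emph{every} relevant $k$, the sign pattern of the polynomial $f_{n,k}(a)$ exhibits a single change. The case $k=n-4$ is clean, but for smaller $k$ contributions from Bernoulli numbers of several parities enter together with several elementary symmetric polynomials, and it is not \textit{a priori} clear that the cancellations preserve the one-sign-change structure. One possibility is to establish a recursion on $n$ for $f_{n,k}(a)$ (perhaps derived from the $h^\ast$-recursion in \Cref{prop:recursion_star}) that maintains the desired sign pattern by induction; alternatively, a direct analysis of the derivative $f_{n,k}'(a)$ via Cauchy bounds on its positive roots would give monotonicity past an explicit threshold $A_n$, and the conjecture would then follow from a finite numerical check on $a \leq A_n$.
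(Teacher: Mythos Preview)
This statement is an \emph{open conjecture} in the paper; no proof is given there, only numerical evidence (the table of~$\beta(n)$). So there is no paper proof to compare against---you are attempting an open problem.

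Your framework is sound, and the case $k=n-4$ goes through cleanly. Your correction of~\eqref{eq:crucial} is right: the inner binomial should read $\binom{a\ell+n-1}{n-1}$, and the paper's own values of $[t]\c L_{\Pn}$ for $n=3,4$ agree with your version, not the printed one. The identification of the four surviving powers $a^{n-5},a^{n-4},a^{n-3},a^{n-1}$, the sign computation $(+,+,+,-)$ via the recurrence $e_4(1,\dots,n)=e_4(1,\dots,n-1)+n\,e_3(1,\dots,n-1)$, and the Descartes conclusion are all correct. This already shows that $\{a:[t^{n-4}]\c L_{\Pn}(t)<0\}$ is a half-line in~$\R_{>0}$, which is a genuine sharpening of \Cref{The:Negativity}.

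The genuine gap is the one you flag yourself: the conjecture requires the \emph{union} over all $k$ of the sets $\{a:f_{n,k}(a)<0\}$ to be upward-closed, and you have handled only $k=n-4$. For smaller~$k$ the polynomial $f_{n,k}(a)$ mixes several Bernoulli numbers of both signs with several elementary symmetric values, and a single-sign-change pattern is not automatic; your sketch does not supply the combinatorial identity that would force it. An alternative route you do not mention: it would suffice to show that the threshold for $k=n-4$ is the smallest among all~$k$ (so that the union equals the half-line you have already determined); this is plausible from the paper's definition of $\beta(n)$ and the data, but is itself unproved. Your Cauchy-bound fallback yields only a verification procedure for each fixed~$n$, not a proof uniform in~$n$. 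As written, the proposal settles a useful special case but leaves the conjecture open.
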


Building towards this conjecture, for small values of $n$, we computed the smallest $a$ such that $[t^{n-4}]\c L_{\c P_n^{(a, \dots, a, a+1)}}(t)<0$, that we name $\beta (n)$:
\begin{center}\begin{tabular}{c|llll}
$n$ & $5$ & $6$ & $7$ & $8$ \\ \hline
$\beta(n)$ & $16$ & $19$ & $23$ & $27$
\end{tabular}\end{center}


The main result of this section generalizes \Cref{ex:negative} as follows.

\begin{theorem}\label{The:Negativity}
For any $n\geq 5$, there exists $\alpha(n)$ such that for all $a\geq \alpha(n)$ and $\b s = (a,\dots, a, a + 1)$ of length $n$, the simplex $\Pn$ is \textbf{not} Ehrhart positive.
More precisely, as $a\to \infty$, we have:
\[
[t^{n-4}]\cL_{\Pn}(t) \sim -\,\frac{1}{720(n-4)!}\, a^{n-1}.
\]
\end{theorem}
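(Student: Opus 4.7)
The plan is to reuse the geometric decomposition from the proof of \Cref{thm:positivity}: the identification $\Pn = \b e_n + \bigl(\mathcal{P}_n^{\b a}\cup \Pyr(\polytope{F},-\b e_n)\bigr)$, together with the unimodular equivalence of $\polytope{F}$ with $\mathcal{P}_{n-1}^{\b a}$, is valid for all $n\geq 2$; the restriction $n\leq 4$ in \Cref{thm:positivity} only enters when checking the signs of specific coefficients. Combining this with \eqref{Ehrhart: Pyramid} applied to the pyramid over $\polytope{F}$ yields the key identity
\[
\mathcal{L}_{\Pn}(t) = \binom{at+n}{n} + \mathcal{R}(t), \qquad \mathcal{R}(t) \coloneqq \sum_{\ell=0}^{t-1}\binom{a\ell+n-1}{n-1}.
\]
The objective then is to prove that as $a\to\infty$, $[t^{n-4}]\mathcal{R}(t)\sim -\frac{a^{n-1}}{720(n-4)!}$, while $[t^{n-4}]\binom{at+n}{n}$ is only $O(a^{n-4})$; this makes $\mathcal{R}$ the source of the negative contribution and gives the claimed asymptotic.

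For the simplex term, $\binom{at+n}{n} = \frac{1}{n!}\sum_{j=0}^n e_j(1,\dots,n)\,(at)^{n-j}$ (where $e_j$ denotes the elementary symmetric polynomials) immediately yields $[t^{n-4}]\binom{at+n}{n} = \frac{e_4(1,\dots,n)}{n!}\,a^{n-4}$, negligible at the order we need. For $\mathcal{R}$ I would expand
\[
\binom{a\ell+n-1}{n-1} = \frac{1}{(n-1)!}\sum_{j=0}^{n-1} e_{n-1-j}(1,\dots,n-1)\,a^j\ell^j,
\]
swap sums, and invoke Faulhaber's formula $\sum_{\ell=0}^{t-1}\ell^j = \frac{1}{j+1}\sum_{i=0}^j\binom{j+1}{i}B_i\,t^{j+1-i}$ (with $B_1=-\tfrac12$). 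Since this Faulhaber polynomial has degree $j+1$ in $t$, only $j\in\{n-5,\dots,n-1\}$ contribute to the coefficient of $t^{n-4}$, so $[t^{n-4}]\mathcal{R}(t)$ is a polynomial in $a$ of degree at most $n-1$.

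The dominant term comes from the single summand $j=n-1$: with $e_0=1$ and $[t^{n-4}]\sum_{\ell<t}\ell^{n-1}=\frac{1}{n}\binom{n}{4}B_4$, and $B_4=-\tfrac{1}{30}$, one obtains
\[
\bigl[a^{n-1}\bigr]\bigl[t^{n-4}\bigr]\mathcal{R}(t) = \frac{1}{(n-1)!}\cdot\frac{1}{n}\binom{n}{4}\cdot\left(-\frac{1}{30}\right) = -\frac{(n-1)(n-2)(n-3)}{720\,(n-1)!} = -\frac{1}{720(n-4)!}.
\]
All other contributions---from $j\leq n-2$ in $\mathcal{R}$ and from the whole $\binom{at+n}{n}$ term---are at most $O(a^{n-2})$, so taking $\alpha(n)$ to be any threshold beyond which the negative leading term $-\frac{a^{n-1}}{720(n-4)!}$ outweighs the positive subleading ones in absolute value finishes the proof.

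The main obstacle is purely organisational: one must carefully track which indices $j$ in the Faulhaber expansion reach the coefficient of $t^{n-4}$ and keep the factorials and binomials straight when simplifying. Conceptually the argument is transparent: the only mechanism capable of producing an $a^{n-1}$-contribution on $t^{n-4}$ is the $B_4$ Faulhaber term, and its sign---dictated by $B_4=-\tfrac{1}{30}<0$---is precisely what forces $\Pn$ to be Ehrhart non-positive for all sufficiently large $a$.
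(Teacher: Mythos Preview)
Your argument is correct and considerably more direct than the paper's. Both routes arrive at the same decomposition $\mathcal{L}_{\Pn}(t)=\binom{at+n}{n}+\mathcal{R}(t)$ (indeed your sum $\sum_{\ell=0}^{t-1}\binom{a\ell+n-1}{n-1}$ equals the paper's $\sum_i h_i^\ast(\b a^{(n-1)})\binom{t+n-1-i}{n}$ by the hockey-stick identity), but the analyses of $[t^{n-4}]\mathcal{R}(t)$ diverge sharply. The paper rewrites $\mathcal{R}(t)$ through the $h^\ast$-vector of $\mathcal{P}_{n-1}^{\b a}$, identifies the relevant coefficient with a weighted sum of Eulerian numbers via an explicit formula for $e_3(1-\ell,\dots,n-1-\ell)$, and then evaluates that sum by differentiating and specialising the exponential generating function of the Eulerian numbers (\Cref{lem:Elementary_degree}, \Cref{lemma:elementary_symmetric}, \Cref{eq:Eulerian_property}, \Cref{Thm:Conjecture_prove}). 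You bypass all of this: expanding $\binom{a\ell+n-1}{n-1}$ as a polynomial in $a\ell$ and applying Faulhaber isolates the $a^{n-1}$-coefficient of $[t^{n-4}]\mathcal{R}(t)$ as $\frac{1}{(n-1)!}\cdot\frac{1}{n}\binom{n}{4}B_4$, and $B_4=-\tfrac{1}{30}$ gives the constant $-\frac{1}{720(n-4)!}$ in one line. The paper explicitly remarks that pushing \eqref{eq:crucial} beyond small $n$ seemed intractable; your observation that only the single Bernoulli term $B_4$ survives at top order in $a$ shows that for the \emph{asymptotic} statement this concern is unfounded. What the paper's approach buys in exchange is an exact closed form for the full $[t^{n-4}]f(t)$ in terms of Eulerian numbers, which could in principle be pushed to sharper threshold estimates for $\alpha(n)$; your Faulhaber expansion yields the same information if one keeps the lower-order $j$ terms, but the bookkeeping is less structured.
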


We want to emphasize that there are also other coefficients in $\mathcal{L}_{\Pn}(t)$ which are negative, but not covered by our theorem.

While it may seem natural to  prove the above theorem using a similar strategy as in \Cref{thm:positivity}, this approach works well only for small $n$. 
The reason is that, when computing the coefficients in \Cref{eq:crucial}, the complexity of each step increases significantly, eventually making the expressions impossible to handle.

Thus, we will prove \Cref{The:Negativity} using an alternative approach.  
However, the proof requires several preliminary results.
First, recall that for $1 \leq k \leq n-1$, the \defn{elementary symmetric polynomial $e_k(Y_1, \dots, Y_{n-1})$} of degree $k$ in the variables $Y_1, \dots, Y_{n-1}$ is defined by
\[
e_k(Y_1, \dots, Y_{n-1}) =  \sum_{1\leq j_1<\dots <j_k \leq n-1} Y_{j_1}\dots Y_{j_k}.
\]
\begin{lemma}\label{lem:Elementary_degree}
For $k\geq 0$, the $k$\textsuperscript{th} elementary symmetric polynomial $e_k(1 - \ell,\, 2 - \ell,\, \dots,\, n - 1 - \ell)$ is a polynomial of degree $2k$ in the variable $n$, and of degree $k$ in the variable $\ell$.
\end{lemma}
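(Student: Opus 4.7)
The natural starting point is the generating-function identity
\[
\prod_{i=1}^{n-1}(x + i - \ell) \;=\; \sum_{k=0}^{n-1}e_k(1-\ell,\, 2-\ell,\, \dots,\, n-1-\ell)\, x^{n-1-k}.
\]
Substituting $y = x - \ell$ turns the left-hand side into $(y+1)(y+2)\cdots(y+n-1)$, which admits the classical Stirling expansion $(y+1)(y+2)\cdots(y+n-1) = \sum_{j=1}^{n}\myatop{n}{j}\, y^{j-1}$, where $\myatop{n}{j}$ denotes the unsigned Stirling number of the first kind. Re-expanding $(x-\ell)^{j-1}$ with the binomial theorem and collecting the coefficient of $x^{n-1-k}$ yields the explicit closed form
\[
e_k(1-\ell, \dots, n-1-\ell) \;=\; \sum_{p=0}^{k} \myatop{n}{n-k+p}\binom{n-k+p-1}{p}\,(-\ell)^p.
\]

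From this formula both degree claims would drop out by termwise inspection. The binomial $\binom{n-k+p-1}{p}$ is a polynomial in $n$ of degree $p$, and I would invoke the classical fact that $\myatop{n}{n-j}$ is a polynomial in $n$ of degree $2j$. Hence the $p$-th summand has $n$-degree $2(k-p)+p = 2k-p$, uniquely maximized by $p=0$, so the $n$-degree of the whole sum is exactly $2k$. For the $\ell$-degree: every summand is a monomial in $\ell$ of exact degree $p$, so the total $\ell$-degree is at most $k$, and the $p=k$ contribution $(-1)^k\binom{n-1}{k}\,\ell^k$ has nonzero coefficient as soon as $n\geq k+1$, so the $\ell$-degree equals $k$.

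The only nontrivial input is the degree-$2j$ behavior of $\myatop{n}{n-j}$, which I consider the main obstacle in this approach. To stay self-contained I would fall back on Newton's identities, writing $e_k = \sum_{\lambda\vdash k}\frac{(-1)^{k-\ell(\lambda)}}{z_\lambda}\, p_\lambda$ in terms of the power sums $p_r(n,\ell) = \sum_{i=1}^{n-1}(i-\ell)^r$. By Faulhaber's formula $\deg_n p_r = r+1$, so for $\lambda \vdash k$ we have $\deg_n p_\lambda = k + \ell(\lambda) \leq 2k$, with equality exactly when $\lambda=(1^k)$. The corresponding leading contribution $p_1^k/k!$ with $p_1 = (n-1)\bigl(\tfrac{n}{2}-\ell\bigr)$ has manifestly nonzero coefficients both on $n^{2k}$ (namely $\tfrac{1}{2^k k!}$) and on $\ell^k$ (a degree-$k$ polynomial in $n$), which pins down both degrees simultaneously and sidesteps any appeal to external Stirling identities.
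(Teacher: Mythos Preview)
Your proposal is correct and takes a genuinely different route from the paper. The paper argues by induction on $k$ via the elementary recurrence $e_k(F_{n-1,\ell}) = (n-1-\ell)\,e_{k-1}(F_{n-2,\ell}) + e_k(F_{n-2,\ell})$, unfolds it into a sum $\sum_{i=1}^{n-2}(n-i-\ell)\,e_{k-1}(F_{n-1-i,\ell})$, and then applies Faulhaber's formula to pass from degree $2k-1$ in the summation variable to degree $2k$ in $n$. Your second approach also rests on Faulhaber, but applied to the power sums $p_r$ rather than to a recursion; combined with Newton's identities it isolates the unique partition $\lambda=(1^k)$ responsible for the top $n$-degree, and hands you the leading coefficient $1/(2^k k!)$ for free. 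Your first approach via the Stirling closed form is more explicit still, at the cost of importing the fact that $\myatop{n}{n-j}$ has degree $2j$ in $n$ --- a fact which, incidentally, is essentially equivalent to the lemma itself for $\ell=0$, so your instinct to fall back on Newton's identities for self-containment is the right one.

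One small point to tighten in your Newton-identity argument for the $\ell$-degree: observing that $p_1^k/k!$ alone has $\ell^k$-coefficient $(-1)^k(n-1)^k/k!$ is not yet enough, since every $p_\lambda$ with $\lambda\vdash k$ contributes to $\ell^k$ (each $p_r$ has $\ell$-degree exactly $r$ with leading coefficient $(-1)^r(n-1)$). The saving observation is that the contribution of $p_\lambda$ to the $\ell^k$-coefficient is $(-1)^k(n-1)^{\ell(\lambda)}$ up to a constant, so only $\lambda=(1^k)$ contributes to $n^k\ell^k$; hence the total $\ell^k$-coefficient is a polynomial in $n$ of exact degree $k$ and cannot vanish. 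This is presumably what your parenthetical ``(a degree-$k$ polynomial in $n$)'' was gesturing at, but it deserves one sentence. Alternatively, and more cheaply, you can read off $[\ell^k]\,e_k = (-1)^k\binom{n-1}{k}$ directly from the definition $e_k = \sum_{|S|=k}\prod_{i\in S}(i-\ell)$, as you already do in your first approach.
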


\begin{proof}
Throughout this proof, we set $F_{n-1,\ell} \coloneqq \{1 - \ell, 2 - \ell, \dots, n - 1 - \ell\}$.  
It is well-known (see \cite[page 20]{macdonald1998symmetric}) that elementary symmetric functions satisfy the recurrence relation:
$$
e_k(F_{n - 1,\ell}) = (n - 1 - \ell) \, e_{k-1}(F_{n - 2,\ell}) \,+\, e_k(F_{n - 2,\ell}).
$$

Using the above recurrence relation, we argue recursively that $e_k(F_{n - 1,\ell})$ is a polynomial of degree $2k$ in the variable $n$ and degree $k$ in the variable $\ell$.
First, observe that we have the evaluation $e_1(F_{n - 1, \ell}) = \sum_{j = 1 - \ell}^{n - 1 - \ell} j = \frac{1}{2} \bigl((n - 1 - \ell)(n - \ell) - (\ell - 1)\ell \bigr),$ 
which is a polynomial of degree $2$ in the variable~$n$ and degree $1$ in the variable~$\ell$.
Next, note that (since $F_{a,\ell}$ is non-empty only if $a \geq 1$):
$$
e_k(F_{n - 1, \ell}) = \sum_{i = 1}^{n-2} e_{k - 1}\bigl(F_{n - 1 - i, \ell}\bigr) (n - i - \ell).
$$

According to Faulhaber's formula, a sum over $i$ from $0$ to $n$ of polynomials of degree $K$ in the variable $i$ yields a polynomial of degree $K + 1$ in the variable $n$.
Consequently, if we suppose that $e_{k-1}(F_{n - 1 -i , \ell})$ is a polynomial of degree $2(k - 1)$ in the variables $n$ and $i$, and degree $k-1$ in the variable $\ell$, then the product $e_{k - 1}(F_{n -1-i, \ell})(n - i - \ell)$ is a polynomial of degree $2k-1$ in the variables $n$ and $i$ and degree $k$ in the variable $\ell$.
Finally, Faulhaber's formula implies that $e_k(F_{n - 1, \ell})$ is a polynomial of degree $2k$ in the variable $n$ and degree $k$ in the variable $ \ell $.
This concludes the recursive argument, and the proof is complete.
\end{proof}

To prove \Cref{The:Negativity} we will need the following identity for a particular evaluation of $e_3$.

\begin{corollary}\label{lemma:elementary_symmetric}
For $\ell, n\geq 0$, the $3^{\text{rd}}$ elementary symmetric polynomial in $n-1$ variables satisfies 
\[
e_3(1 - \ell, \dots, n - 1 - \ell) = \frac{1}{48}(n - 3)(n - 2)(n - 1)(n - 2\ell)(n^2 - 4n\ell + 4\ell^2 - n).
\]
\end{corollary}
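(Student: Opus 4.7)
The plan is to reduce to a symmetric situation by centering the arguments on their arithmetic mean. Let $Y_i \coloneqq i - \ell$ for $1 \leq i \leq n-1$, and let $\mu \coloneqq \frac{n-2\ell}{2}$, which is exactly the arithmetic mean of the $Y_i$'s. Setting $r_i \coloneqq Y_i - \mu = i - \frac{n}{2}$, the multiset $\{r_1, \ldots, r_{n-1}\}$ is invariant under $r \mapsto -r$, so every odd power sum $p_k(r) \coloneqq \sum_i r_i^k$ vanishes. Newton's identities then immediately give $e_1(r) = 0$ and $e_3(r) = \tfrac{1}{6}\bigl(p_1(r)^3 - 3 p_1(r) p_2(r) + 2 p_3(r)\bigr) = 0$.

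The first key step is to expand $e_3(Y)$ via the standard translation formula for elementary symmetric polynomials,
\[
e_k(c + x_1, \ldots, c + x_N) = \sum_{j=0}^{k} \binom{N-j}{k-j}\, c^{k-j}\, e_j(x_1, \ldots, x_N),
\]
applied with $N = n-1$, $c = \mu$, and $k = 3$. Thanks to the vanishing of $e_1(r)$ and $e_3(r)$, the four-term sum collapses to
\[
e_3(Y) = \binom{n-1}{3}\,\mu^3 \;+\; (n-3)\,\mu\, e_2(r).
\]

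It then only remains to compute $e_2(r)$, for which Newton gives $e_2(r) = -\tfrac{1}{2} p_2(r)$, while $p_2(r) = \sum_{i=1}^{n-1}\bigl(i - \tfrac{n}{2}\bigr)^2$ is a routine Faulhaber computation evaluating to $\frac{n(n-1)(n-2)}{12}$, hence $e_2(r) = -\frac{n(n-1)(n-2)}{24}$. Substituting back and factoring out $\frac{(n-1)(n-2)(n-3)}{24}\,\mu$ yields $e_3(Y) = \frac{(n-1)(n-2)(n-3)}{24}\,\mu\,(4\mu^2 - n)$, and the identities $2\mu = n - 2\ell$ and $4\mu^2 = n^2 - 4n\ell + 4\ell^2$ recover the stated formula. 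I do not foresee a genuine obstacle: the whole argument is elementary, and the only delicate bookkeeping step (the translation formula) is precisely what the symmetry observation, forcing $e_1(r) = e_3(r) = 0$, reduces to a single power-sum evaluation.
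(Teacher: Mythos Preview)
Your proof is correct and takes a genuinely different route from the paper. The paper argues as follows: by \Cref{lem:Elementary_degree}, both sides are polynomials of degree~$6$ in~$n$ and degree~$3$ in~$\ell$, so it suffices to check equality at $7\times 4 = 28$ sample points, which is done (implicitly by computer). Your argument, by contrast, is a direct closed-form computation: centering the arguments at their mean $\mu = \tfrac{n-2\ell}{2}$ makes the shifted variables $r_i = i - \tfrac{n}{2}$ symmetric under negation, so $e_1(r)=e_3(r)=0$ and the translation identity collapses to just two terms, both expressible via the single power sum $p_2(r)$.

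What each approach buys: the paper's method is essentially automatic once the degree bound of \Cref{lem:Elementary_degree} is in place, and would scale to $e_k$ for any fixed $k$ without new ideas (just more sample points). Your method is self-contained (it does not need \Cref{lem:Elementary_degree} at all) and explains the structure of the answer: the factor $(n-2\ell)$ is $2\mu$, and $(n^2 - 4n\ell + 4\ell^2 - n)$ is $4\mu^2 - n$, so the factored form falls out naturally rather than appearing as a lucky coincidence. The only minor caveat is that your argument, as written, tacitly assumes $n\geq 1$ so that the list $r_1,\dots,r_{n-1}$ is well-defined; the boundary cases $n\in\{1,2,3\}$ are covered by the factor $(n-1)(n-2)(n-3)$ on the right-hand side.
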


\begin{proof}
By \Cref{lem:Elementary_degree}, we know that $e_3(F_{n-1,\ell})$ is a polynomial of degree $6$ in $n$ and degree $3$ in $\ell$.
As the right-hand side shares the same degrees, the claim follows by verifying that both polynomials agree for all $0\leq n \leq 6$ and all $0\leq \ell \leq 3$. 
\end{proof}

The other crucial ingredient to prove \Cref{The:Negativity} is the following identity involving the Eulerian numbers (defined in \Cref{remark:Eulerian_numbers}).

\begin{lemma}\label{eq:Eulerian_property}
Let $A(n, \ell)$ be the Eulerian numbers.
Then, for $n\geq 5$ we have
$$\sum_{\ell = 1}^{n - 1}\frac{A(n - 1 , \ell - 1)}{(n-1)!}\, \ell(n - 2\ell)(n^2 - 4n\ell + 4\ell^2-n) = \frac{n}{15}$$
\end{lemma}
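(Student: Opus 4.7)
My plan is to interpret the identity as an expectation over the Eulerian distribution and exploit its symmetry. Set $\phi(\ell) := \ell(n-2\ell)\bigl((n-2\ell)^2 - n\bigr)$, so that the left-hand side is $E[\phi(L)]$ where $L$ has $P(L=\ell) = A(n-1,\ell-1)/(n-1)!$ for $\ell \in \{1,\dots,n-1\}$; combinatorially, $L-1$ is the number of ascents of a uniformly random $\sigma \in \mathfrak{S}_{n-1}$. The symmetry $A(n-1,\ell-1) = A(n-1,n-1-\ell)$ makes the distribution of $L$ symmetric about $n/2$. A direct computation gives $\phi(\ell) + \phi(n-\ell) = -(n-2\ell)^2\bigl((n-2\ell)^2-n\bigr)$, a polynomial in $(n-2\ell)^2$ alone, so setting $M := L - n/2$ collapses the identity to the moment identity
\[
-8\,E[M^4] + 2n\,E[M^2] \;=\; \frac{n}{15}.
\]

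For the second moment I would decompose $L - 1 = \sum_{i=1}^{n-2} X_i$ with $X_i := \mathbf{1}[\sigma(i)<\sigma(i+1)]$. Each $X_i$ is Bernoulli$(1/2)$; two $X_i, X_j$ are independent whenever $|i-j|\geq 2$ (their defining pairs of positions are disjoint), while adjacent pairs have $\mathrm{Cov}(X_i,X_{i+1}) = 1/6 - 1/4 = -1/12$. Summing yields $E[M^2] = \mathrm{Var}(L) = (n-2)/4 - (n-3)/6 = n/12$, so the claim reduces to proving $E[M^4] = n^2/48 - n/120$.

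Symmetry of $L$'s distribution also gives $E[M^3]=0$, hence $E[M^4] = \kappa_4(L) + 3\kappa_2(L)^2 = \kappa_4(L) + n^2/48$, and it remains to show $\kappa_4(L) = -n/120$. By multilinearity of joint cumulants, $\kappa_4(L) = \sum_{(i_1,\dots,i_4)\in[n-2]^4}\kappa(X_{i_1},\dots,X_{i_4})$, and a joint cumulant vanishes whenever its arguments split into two independent groups; for our $X_i$'s this forces the distinct entries of $(i_1,\ldots,i_4)$ to sit inside a window of at most four consecutive positions. I would group the nonzero tuples by window size into four cases---(a) all indices equal, (b) two consecutive, (c) three consecutive, (d) four consecutive---count the ordered quadruples with each multiset, and compute the required joint cumulants from the joint distributions of $2$, $3$, or $4$ consecutive ascent indicators (easily read off from the ascent patterns of $\mathfrak{S}_3$, $\mathfrak{S}_4$, $\mathfrak{S}_5$) using the standard moment-cumulant partition formula. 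The four aggregate contributions assemble into
\[
\kappa_4(L) \;=\; -\frac{n-2}{8} + \frac{n-3}{4} - \frac{n-4}{6} + \frac{n-5}{30} \;=\; -\frac{n}{120},
\]
which finishes the proof. The main obstacle is the bookkeeping in cases (c) and (d): one has to verify the partial vanishings $\kappa(X_a,X_a,X_{a+1},X_{a+2}) = \kappa(X_a,X_{a+1},X_{a+2},X_{a+2}) = 0$, compute $\kappa(X_a,X_{a+1},X_{a+1},X_{a+2}) = -1/72$, and compute $\kappa(X_a,X_{a+1},X_{a+2},X_{a+3}) = 1/720$, each of which requires enumerating the $15$ set partitions of a $4$-element set. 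The hypothesis $n\geq 5$ enters precisely because case (d) is vacuous for $n\leq 4$, and the identity does genuinely fail at $n = 3$ and $n = 4$.
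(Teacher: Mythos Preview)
Your proposal is correct and takes a genuinely different route from the paper. The paper proves the identity analytically: it writes the polynomial $\omega(n,\ell)=\ell(n-2\ell)\bigl((n-2\ell)^2-n\bigr)$ in the basis $\ell^{\underline a}n^{\underline b}$, realises $\sum_{n,\ell}A(n,\ell)\,\omega(n,\ell)\,t^\ell x^n/n!$ as a linear combination of partial derivatives of the explicit rational function $E(x,t)=(t-1)/(t-e^{(t-1)x})$, evaluates at $t=1$, integrates once in $x$, and reads off the coefficient of $x^n$. Your argument is probabilistic: you exploit the symmetry $A(n-1,\ell-1)=A(n-1,n-1-\ell)$ to collapse the sum to $-8\,E[M^4]+2n\,E[M^2]$ with $M=L-n/2$, and then obtain these moments by writing $L-1$ as a sum of ascent indicators $X_i$ and computing $\kappa_2$ and $\kappa_4$ via joint cumulants, using that nonzero joint cumulants of the $X_i$ are supported on consecutive index windows. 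The paper's method is more mechanical and would adapt immediately to any other polynomial weight $\omega(n,\ell)$, at the cost of opaque rational-function manipulations and an appendix of coefficients. Your method explains structurally why the answer is linear in $n$ (each case contributes a linear count of windows times a fixed cumulant) and avoids generating functions entirely, but is tied to the specific degree of $\omega$ and requires the fourth-cumulant bookkeeping you flag. One small remark: your explanation of where $n\ge 5$ enters is slightly loose---for $n=5$ case~(d) is also vacuous, yet the identity holds; the precise point is that the window counts $n-2,n-3,n-4,n-5$ are all nonnegative exactly when $n\ge 5$, so your polynomial expression for $\kappa_4(L)$ is combinatorially valid from that point on.
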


The key idea is to differentiate the exponential generating function of the Eulerian numbers with respect to the parameters $x$ and $t$.
Using these partial derivatives, we can build up (the series of) the product of Eulerian numbers $A(n, \ell)$ by any polynomial in the variables $n$ and $\ell$.
In order to comply with technical difficulties, we need to integrate our series against $x$ and to withdraw some of the initial terms, but in essence, we are simply manipulating polynomials.


\begin{proof}[Proof of \Cref{eq:Eulerian_property}]
To simplify notation, let $\omega(n - 1,\ell - 1) = \ell(n - 2\ell)(n^2 - 4n\ell + 4\ell^2-n)$.
For positive integers $k$ and $\ell$, we use  $\ell^{\underline{k}} = \ell(\ell-1)\dots(\ell-k+1)$ to denote the falling factorial of $\ell$.

In the following, we study the generating series $G(x , t) \coloneqq \sum_{n\geq0}\sum_{\ell=0}^n A(n,\ell)\omega(n,\ell) \, t^\ell\frac{x^{n+1}}{(n+1)!}$, and its evaluation $G(x , 1)$ at $t=1$. 

The claim will follow from proving that the truncation $\tilde{G}(x , 1)\coloneqq \sum_{n\geq4}\sum_{\ell=0}^n A(n,\ell)\omega(n,\ell)\, \frac{x^{n+1}}{(n+1)!}$ equals the generating series $\sum_{n \geq 5} \frac{1}{15}x^n$.

To compute $G(x,t)$, we first consider the exponential generating series  $E(x,t)$ for the Eulerian numbers (see for instance \cite[Theorem 1.6]{Eulerian_book}), given by 

\begin{equation}\label{eq:Eulerian}
 E(x,t) = \sum_{ 0\leq \ell \leq n}A(n,\ell) \, t^\ell\frac{x^n}{n!} = \frac{t - 1}{t-\exp\bigl((t - 1)x\bigr)}
 \end{equation}

In order to go from $E(x,t)$ to $G(x,t)$, the crucial idea is to express $G(x,t)$ as linear combination of appropriate partial derivatives of $E(x,t)$.
We start by computing the derivatives. For fixed positive integers $a,b \geq 0$, we have:

\begin{equation}\label{eq:derivatives}
t^ax^b\,\frac{\partial^{a+b} E(x, t)}{\partial t^a\, \partial x^b}
=  \sum_{n\geq0}\sum_{\ell=0}^n A(n,\ell)\, \ell^{\underline{a}}n^{\underline{b}}\, t^\ell\frac{x^n}{n!}
\end{equation}

Since $\bigr(\ell^{\underline{a}}n^{\underline{b}} ~ : ~ 0\leq a\leq 4,\, 0\leq  b \leq 3 \big)$ is a basis for the vector space of polynomials of degree at most $4$ in $\ell$ and at most $3$ in $n$, we can rewrite $\omega(n,\ell )$ as follows:

\begin{equation}\label{eq:lincomb}
\omega(n, \ell) = \sum_{\substack{0 \leq a \leq 4 \\ 0 \leq b \leq 3}} \lambda_{a, b} \, \ell^{\underline{a}}n^{\underline{b}} 
\end{equation}
(see \Cref{Anex} for the explicit values of the coefficients $\lambda_{a,b}\in \Q$).

Combining \eqref{eq:derivatives} and \eqref{eq:lincomb} we get

 $$\frac{\partial G}{\partial x}(x , t) = 
 \sum_{n\geq0}\sum_{\ell=0}^n A(n,\ell)\omega(n,\ell) \, t^\ell\frac{x^n}{n!}
 =\sum_{\substack{0 \leq a \leq 4 \\ 0 \leq b \leq 3}}\lambda_{a,b}\, t^a x^b\frac{\partial^{a+b} E(x, t)}{\partial t^a \partial x^b}$$

 Using the expression for $E(x,t)$ on the right-hand side of \eqref{eq:Eulerian} one can compute the derivatives above  explicitly, yielding an expression for $\frac{\partial G}{\partial x}(x , t)$ as a rational function, which allows for evaluation at $t = 1$. This way, we obtain

$$\frac{\partial G}{\partial x}(x , 1)
= -\frac{4x^3 - 20x^2 + 30x - 15}{15(x-1)^2}.$$

Integrating both sides with respect to $x$, we find 

$$ G(x , 1) = -\frac{x^3(x^2-5x+5)}{15(x-1)}
$$
and discarding terms of degree $\leq 4$ allows us to conclude the proof:
$$
\tilde{G}(x,1)=\frac{x^5}{15(1-x)} = \sum_{n \geq 5} \frac{1}{15}x^n. \qedhere$$
\end{proof}

\begin{theorem}\label{Thm:Conjecture_prove}

Let $a\in \N$ be a positive integer and let $\b a^{(n-1)} = (a,\dots, a)$ be the constant sequence of length $n-1$. 
Let $h_i^\ast(\b a^{(n-1)})$ denote the $i$\textsuperscript{th} entry of the $h^\ast$-vector of $\mathcal{P}_{n-1}^{\b a^{(n-1)}}$. We define $\mathdefn{\mathcal{R}(t)} \coloneqq \sum_{i=0}^{n-1} h_i^\ast(\b a^{(n-1)})\binom{t + n - i - 1}{n}$. 
Then, as $a\rightarrow \infty$, we have 
\begin{equation*}
[t^{n - 4}]\mathcal{R}(t) \sim \frac{-a^{n-1}}{720(n-4)!}
\end{equation*}

\end{theorem}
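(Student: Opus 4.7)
The plan is to rewrite $\mathcal{R}(t)$ as an explicit sum of binomial coefficients and then extract the coefficient of $t^{n-4}$ by a polynomial expansion, isolating the highest-order-in-$a$ contribution via Faulhaber's formula.

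First I would identify $\mathcal{R}(t)$ concretely. Applying \Cref{Lemma:Beck_book} to $\mathcal{P}_{n-1}^{\b a^{(n-1)}}$ (of dimension $n-1$) and summing over $\ell\in\{0,\dots,t-1\}$ using the hockey-stick identity $\sum_{\ell=0}^{t-1}\binom{\ell+n-1-i}{n-1}=\binom{t+n-1-i}{n}$ gives $\mathcal{R}(t)=\sum_{\ell=0}^{t-1}\mathcal{L}_{\mathcal{P}_{n-1}^{\b a^{(n-1)}}}(\ell)$. Since $\mathcal{P}_{n-1}^{\b a^{(n-1)}}$ is unimodularly equivalent to the $a$-th dilation of the standard $(n-1)$-simplex, its Ehrhart polynomial is $\binom{a\ell+n-1}{n-1}$, so
$$
\mathcal{R}(t)=\sum_{\ell=0}^{t-1}\binom{a\ell+n-1}{n-1}.
$$

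Next I would expand the summand as a polynomial in $\ell$:
$$
\binom{a\ell+n-1}{n-1}=\frac{1}{(n-1)!}\prod_{j=1}^{n-1}(a\ell+j)=\sum_{m=0}^{n-1}\frac{e_{n-1-m}(1,\dots,n-1)}{(n-1)!}\,a^{m}\,\ell^{m}.
$$
Summing over $\ell$, each monomial $\ell^{m}$ yields, by Faulhaber's formula, a polynomial in $t$ of degree $m+1$ with lowest power $t^{1}$. Hence only the terms with $m\in\{n-5,\dots,n-1\}$ can contribute to $[t^{n-4}]\mathcal{R}(t)$, and the $m$-th term contributes at order exactly $a^{m}$. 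The unique contribution at the top order $a^{n-1}$ comes from $m=n-1$, giving
$$
[t^{n-4}]\mathcal{R}(t)\;\sim\;\frac{a^{n-1}}{(n-1)!}\,[t^{n-4}]\sum_{\ell=0}^{t-1}\ell^{n-1}.
$$

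Finally, Faulhaber's formula $\sum_{\ell=0}^{t-1}\ell^{n-1}=\frac{1}{n}\sum_{j=0}^{n-1}\binom{n}{j}B_{j}\,t^{n-j}$ (with $B_{1}=-\tfrac12$) extracts the target coefficient as $\frac{B_{4}}{n}\binom{n}{4}=-\tfrac{(n-1)(n-2)(n-3)}{720}$, using $B_{4}=-\tfrac{1}{30}$ and $n\geq 5$ so that $j=4$ lies in the summation range. Combining and simplifying $\tfrac{(n-1)(n-2)(n-3)}{(n-1)!}=\tfrac{1}{(n-4)!}$ yields the claim. The argument is a direct computation; the only care needed is tracking Bernoulli numbers correctly (which produce the factor $1/720$) and checking that each $m<n-1$ gives a contribution of strictly smaller order in $a$, hence irrelevant to the asymptotic equivalence.
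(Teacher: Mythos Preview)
Your proof is correct and considerably more direct than the paper's. Whereas the paper splits $\mathcal{R}(t)=g(t)-f(t)$, computes $[t^{n-4}]f(t)$ via the evaluation $e_3(1-i,\dots,n-1-i)$, expresses $h_i^\ast(\b a^{(n-1)})$ asymptotically through hypersimplex volumes and Eulerian numbers, and finally invokes a nontrivial Eulerian-number identity (\Cref{eq:Eulerian_property}) established by generating-function manipulations, you bypass all of this by first recognising $\mathcal{R}(t)=\sum_{\ell=0}^{t-1}\binom{a\ell+n-1}{n-1}$ (essentially the identity \eqref{eq:crucial} one dimension down) and then reading off the leading-$a$ coefficient directly from Faulhaber's formula and the single Bernoulli value $B_4=-\tfrac{1}{30}$. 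The paper explicitly remarks after \Cref{thm:positivity} that pushing \eqref{eq:crucial} to general $n$ ``eventually makes the expressions impossible to handle''; your argument shows that for the \emph{asymptotic} statement this is not so, because isolating the top power $a^{n-1}$ leaves only one Faulhaber coefficient to compute. The paper's route yields \Cref{lemma:elementary_symmetric} and \Cref{eq:Eulerian_property} as by-products of possible independent interest; your route trades these for a short, self-contained proof of the theorem itself.
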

\begin{proof}
We start by rewriting the polynomial $\mathcal{R}(t)$ as follows:
\begin{align*}
\mathcal{R}(t) &=\sum_{i=0}^{n-1} h_i^{\ast}(\b a^{(n-1)})\binom{t  + n-i - 1}{n-1}\cdot\frac{(t-i)}{n}\\
&= \frac{t}{n}\sum_{i=0}^{n-1} h_i^{\ast}(\b a^{(n-1)})\binom{t   + n-i - 1}{n-1} \,\,-\,\, \frac{1}{n}\sum_{i=0}^{n-1} h_i^{\ast}(\b a^{(n-1)})\, i \,\binom{t + n - i - 1}{n-1}\\
&= \underbrace{\frac{t}{n} \mathcal{L}_{\mathcal{P}_{n-1}^{\b a^{(n-1)}}}(t)}_{g(t)} \,\,-\,\, \underbrace{\frac{1}{n}\sum_{i=0}^{n-1} h_i^{\ast}(\b a^{(n-1)}) \, i \,\binom{t  + n-i - 1}{n-1}}_{f(t)},
\end{align*}
where the last equality follows from \Cref{Lemma:Beck_book}.  
Since $\mathcal{P}_{n-1}^{\b a^{(n-1)}}$ is unimodular equivalent to the $a$\textsuperscript{th} dilation of the standard $(n-1)$-simplex, we get  $g(t)= \frac{t}{n}\binom{at + n - 1}{n - 1}$ (see e.g., \cite{beck2015computing}) which implies that there exists $m_{n}\in\Q^+$ such that $[t^{n - 4}]g(t) \sim a^{n - 5}m_{n}$  when $a\rightarrow +\infty$.
To determine the asymptotics of $[t^{n - 4}]\mathcal{R}(t) = [t^{n - 4}]g(t) - [t^{n - 4}]f(t)$ it is therefore sufficient to show, when $a\to+\infty$, that $[t^{n - 4}]f(t) \sim \frac{a^{n-1}}{720(n - 4)!}$. 
Using that $\binom{t+n-i-1}{n-1} = \frac{1}{(n-1)!}\bigl((t+n-i-1)(t+n-i-2)\dots t \dots (t-i)\bigr)$ is a polynomial of degree $n-1$ in $t$, we conclude from the definition of $f(t)$ and \Cref{lemma:elementary_symmetric}:
\begin{align*}
[t^{n - 4}]f(t) &= \frac{1}{n!}\sum_{i=0}^{n-1} h_i^{\ast}(\b a^{(n-1)})\, i \,e_3(1 - i, \dots, n - 1 - i) \\
&= \frac{1}{48n(n-4)!}\sum_{i=0}^{n-1} h_i^{\ast}(\b a^{(n-1)})\, i (n - 2i)(n^2 - 4ni + 4i^2 - n) 
\end{align*}

Now, observe that:
$$
h_i^{\ast}(\b a^{(n-1)}) = \#\{(x_1,\dots, x_n)\in\N^n ~:~ x_j < a ~\text{ and }~ \sum_{\ell = 1} ^n x_\ell = ia\} = \mathcal{L}_{\simplex_{n,i}}(a)
$$
where $\mathdefn{\simplex_{n, i}} \coloneqq \{\b x\in \R^n ~:~ x_1+\dots+x_n = i\}$ is the hypersimplex with parameters $n$ and $i$.
Since $\mathcal{L}_{\simplex_{n,i}}(t)$ is a polynomial in $t$ with leading term $\vol(\simplex_{n,i}) $, it follows (see e.g., \cite{stanley1977eulerian,MR3558056}) that, 
\[
h_i^{\ast}(\b a^{(n-1)}) \sim  \vol(\simplex_{n,i})\cdot a^{n-1} = \frac{A(n - 1, i - 1)}{(n-1)!}a^{n-1},
\]
for large $a$ (see \Cref{remark:Eulerian_numbers} for the definition of the Eulerian numbers $A(n - 1, i - 1)$).
Thus, when $a \to +\infty$:
\begin{align*}
[t^{n - 4}]f(t) &\sim \frac{a^{n-1}}{48n(n-4)!}\sum_{i=0}^{n-1} \frac{A(n-1,i-1)}{(n-1)!} i (n - 2i)(n^2 - 4ni + 4i^2 - n) &&\\
&\sim \frac{a^{n-1}}{48n(n-4)!}\frac{n}{15} && \text{by \Cref{eq:Eulerian_property}}\\
&\sim \frac{a^{n-1}}{720(n-4)!} &&
\qedhere\end{align*}
\end{proof}

We are now ready to prove \Cref{The:Negativity}.

\begin{proof}[Proof of \Cref{The:Negativity}]
Let $\b s = (a,\dots, a, a+1)$ be a sequence of length $n$ with $a \geq 1$, and let $\b a^{(n)} = (a,\dots, a)$ denote the constant subsequence of length~$n$.  
Then, by \Cref{prop:recursion_star}, the $h^\ast$-polynomial of $\mathcal{P}_n^{\b s}$ can be expressed as follows:

\begin{align*}
    h^\ast(t) & = h_0^\ast(\b s) + h_1^\ast (\b s) t + \ldots + h_n^\ast (\b s) t^n \\
              & = h_0^\ast(\b a^{(n)}) + \Bigl( h_1^\ast(\b a^{(n)}) + h_0^\ast(\b a^{(n-1)})\Bigr) t + \ldots + \Bigl(h_n^\ast(\b a^{(n)}) + h_{n-1}^\ast(\b a^{(n-1)})  \Bigr) t^n
\end{align*}

Therefore, by applying \Cref{Lemma:Beck_book}, we deduce  the following expression:


$$\begin{array}{rcccl}
\mathcal{L}_{\Pn}(t) &=& h_0^{\ast}(\b s)\binom{t + n}{n}  +  \dots +h_n^{\ast}(\b s)\binom{t}{n} & \\
&=& \sum_{i = 0}^n h_i^{\ast}(\b a^{(n)}) \binom{t + n - i}{n} &+& \sum_{i = 0}^{n - 1} h_i^{\ast}(\b a^{(n-1)}) \binom{t + n - 1 - i}{n}\\
&=& \binom{at + n}{n} &+& \mathcal{R}(t), 
\end{array}$$
where in the last step we use that, by \Cref{Lemma:Beck_book}, the first sum equals $\mathcal{L}_{\mathcal{P}_n^{\b a^{(n)}}}(t)$, which, as in the proof of \Cref{thm:positivity}, equals $\binom{at+n}{n}$.

Therefore, by \Cref{Thm:Conjecture_prove}, we have that $[t^{n - 4}]\mathcal{L}_{\Pn}(t) =  \frac{-a^{n-1}}{720(n-4)!} + \omega(n,a)$ where, for a fixed $n$, we have $\deg(\omega(n,a)) < n - 1$ (for the degree as a polynomial in the variable $a$).
Hence, there exist $\alpha(n)$ such that for all $a\geq\alpha(n)$, we have $[t^{n - 4}]\mathcal{L}_{\Pn}(t) < 0$.
\end{proof}

\section{Regular, flag and unimodular Triangulation}\label{Section:Regular_Unimodular}

The prevalence of (flag, regular) unimodular triangulations in discrete geometry motivates the search for their existence for specific classes of polytopes.
In particular, for $\b s$-lecture hall simplices, Hibi, Olsen and Tsuchiya  formulated the  following conjecture, that inspires this section.

\begin{conjecture}[{\cite[Conjecture 5.2]{hibi2016}}]\label{conj:Unimo}
For any $\b s$, $\Pn$ admits a unimodular triangulation.
\end{conjecture}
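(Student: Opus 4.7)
The plan is to attack \Cref{conj:Unimo} inductively, constructing a flag, regular, unimodular triangulation of $\Pn$ as an iterated one-point extension via \Cref{thm:One_point_triangulation}, with \Cref{thm:B} providing the base cases.

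First, I would fix a linear order $\b p_1, \b p_2, \dots, \b p_N$ on the lattice points of $\Pn$, tailored to the bijection with the $\b s$-inversion sequences $\Is$. A natural candidate orders the $\b p_i$ by ascent statistic, since the identity $h^\ast_{\Pn}(t) = \Epoly$ prescribes how many maximal simplices should appear at each height level. Starting from a flag, regular, unimodular triangulation $\c T_0$ of an initial kernel --- for instance the vertex-simplex $\conv(\b v_0, \dots, \b v_n)$ --- at step $k$ one sets $\polytope{R}_k \coloneqq \conv(\b p_1, \dots, \b p_k)$ with triangulation $\c T_k$ and builds $\c T_{k+1}$ by pyramiding the facets of $\c T_k$ visible from $\b p_{k+1}$. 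By \Cref{thm:One_point_triangulation}(2), $\c T_{k+1}$ is automatically flag and regular, so the whole problem reduces to proving that each new pyramidal simplex $\conv(\polytope{S} \cup \{\b p_{k+1}\})$ is unimodular.

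The hard part --- and what has kept the conjecture open --- is precisely this unimodularity step: the apex $\b p_{k+1}$ must lie at lattice distance exactly one from the affine hull of every visible facet $\polytope{S} \in \c T_k|_{\c F_{\b p_{k+1}}}$. A plausible route is to describe the visible subcomplex as a disjoint union of ``boxes'' indexed by partial inversion sequences (mirroring the half-open parallelotope $[0, s_1) \times \cdots \times [0, s_n)$ that carries $\Is$), and then to verify the distance-one condition by a direct determinant computation in each such box. This succeeds when $\b s$ satisfies the multiplicative/divisibility hypotheses of \Cref{thm:B}, but for arbitrary $\b s$ there is no obviously canonical global ordering that keeps every apex distance equal to one throughout. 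A fallback is to decompose $\b s$ into subsequences of the forms handled by \Cref{thm:B} and glue the resulting triangulations along shared facets; the obstacle then shifts to forcing the two boundary triangulations to agree without destroying unimodularity on the common facet, which in general demands a refinement that need not itself be unimodular, so this remains the central difficulty that prevents an immediate resolution of the conjecture.
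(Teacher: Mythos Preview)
The statement you are attempting to prove is \Cref{conj:Unimo}, which is an \emph{open conjecture} of Hibi, Olsen and Tsuchiya. The paper does not prove it and does not claim to; it only establishes the special case of \Cref{cor:extended} (\ie \Cref{thm:B}) as further evidence. Consequently there is no ``paper's own proof'' to compare against.

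Your proposal is also not a proof. It is a research outline that correctly isolates the obstruction and then, in its final paragraph, explicitly concedes that the obstruction is unresolved (``this remains the central difficulty that prevents an immediate resolution of the conjecture''). In other words, you have written a discussion of why the conjecture is hard, not a proof of it. The specific gap is exactly the one you name: for a general sequence $\b s$ there is no known ordering of the lattice points of $\Pn$ for which every one-point extension keeps the apex at lattice distance one from all visible facets. Your fallback idea of decomposing $\b s$ into subsequences handled by \Cref{thm:B} and gluing is also not a proof, since (as you say yourself) forcing the boundary triangulations to agree may destroy unimodularity.

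It is worth noting that the mechanism you propose --- iterated one-point extensions via \Cref{thm:One_point_triangulation}, with unimodularity checked by a lattice-distance-one condition --- is precisely the engine the paper uses to prove its partial result. \Cref{lem:Unimodular_triangulation} and \Cref{Thm:unimodular-tri} carry out exactly this construction when $s_n = k\cdot s_{n-1} + \varepsilon$ with $\varepsilon\in\{0,1\}$, and the paper's \Cref{fig:remark_final} together with the open question following \Cref{cor:extended} explain why the method breaks for $\varepsilon\geq 2$: multiple new lattice points appear in the added region, and the single one-point extension no longer suffices. So your diagnosis of the difficulty matches the paper's, but neither you nor the paper overcomes it.
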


Some progress has been made towards proving this conjecture.
Foremost, \cite[Theorem~3.3 \& Remark~3.4]{hibi2016} proves that $\Pn$ admits a unimodular triangulation if each entry of $\b s$ is a positive integral multiple or a divisor of the previous one, \ie for each $i$ there exists $k_i\geq 1$ such that  $s_{i+1} = k_i\cdot s_i$ or $s_{i+1} = \frac{s_i}{k_i}$.
Additionally, it was shown by Br\"anden and Solus in \cite[Corollary~3.5]{SolusAndBradden} that for sequences $\b s$ with  $0\leq s_{i+1} - s_i \leq 1$ for all $i$, the toric ideals of $\Pn$ has a square-free Gr\"obner basis: this implies that $\Pn$ admits a regular unimodular triangulation also in this case.

The goal of this section is to provide further evidence for  \Cref{conj:Unimo} by showing that it remains true for sequences that arise as kind of combinations of the aforementioned ones. 
This generalizes both results from \cite{hibi2016} and \cite{SolusAndBradden}.
We are going to present some preliminary results, that will be necessary for the proper development of this section.

\begin{lemma}\label{lemma:s_lecture_one_point_extension}
Let $\b s = (s_1, \dots, s_{n-1}, s_n)$ and $\b s' = (s_1, \dots, s_{n-1}, s_n + 1)$.
Then $\c P_n^{\b s'}$ is the one-point extension of (the translation by $\b e_n$ of) $\c P_n^{\b s}$ by the point $\b 0$.
\end{lemma}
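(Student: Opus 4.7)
The plan is to verify the claim by directly comparing vertex sets, using the convex-hull description of lecture hall simplices recalled in the preliminaries.

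First, I would list the vertices explicitly. By definition,
\[
\c P_n^{\b s} = \conv\bigl\{ \b 0,\; s_n\b e_n,\; s_{n-1}\b e_{n-1}+s_n\b e_n,\; \dots,\; s_1\b e_1+\dots+s_n\b e_n\bigr\},
\]
and similarly the vertices of $\c P_n^{\b s'}$ are obtained by replacing every occurrence of $s_n$ by $s_n+1$. Translating $\c P_n^{\b s}$ by $\b e_n$ shifts its vertex list to
\[
\b e_n,\; (s_n+1)\b e_n,\; s_{n-1}\b e_{n-1}+(s_n+1)\b e_n,\; \dots,\; s_1\b e_1+\dots+s_{n-1}\b e_{n-1}+(s_n+1)\b e_n.
\]
Observe that all of these coincide with vertices of $\c P_n^{\b s'}$ except for the first one, $\b e_n$, while $\c P_n^{\b s'}$ has the additional vertex $\b 0$. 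So I just need to show that throwing $\b 0$ into the convex hull makes $\b e_n$ redundant.

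Next, I would note the convex combination
\[
\b e_n \,=\, \tfrac{1}{s_n+1}\bigl((s_n+1)\b e_n\bigr) + \tfrac{s_n}{s_n+1}\,\b 0,
\]
which shows that $\b e_n$ lies on the segment between $\b 0$ and $(s_n+1)\b e_n$ (both of which are vertices of $\c P_n^{\b s'}$). Hence
\[
\conv\bigl((\c P_n^{\b s}+\b e_n)\cup\{\b 0\}\bigr) = \conv\bigl(\{\b 0\}\cup\{(s_n+1)\b e_n,\, s_{n-1}\b e_{n-1}+(s_n+1)\b e_n,\, \dots\}\bigr) = \c P_n^{\b s'}.
\]

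Finally, to justify the phrase \emph{one-point extension}, I would check that $\b 0$ is not already contained in $\c P_n^{\b s}+\b e_n$: a point of this translate has $n$-th coordinate at least $1$ (since every point of $\c P_n^{\b s}$ satisfies $x_n\geq 0$), so $\b 0\notin \c P_n^{\b s}+\b e_n$. The verification is essentially bookkeeping; the only mild subtlety is spotting the collinearity of $\b 0$, $\b e_n$, and $(s_n+1)\b e_n$ that eliminates $\b e_n$ from the vertex list, so this is where I would place the emphasis in the writeup.
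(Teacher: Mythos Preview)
Your proof is correct and follows essentially the same approach as the paper: compare vertex sets of $\c P_n^{\b s}+\b e_n$ and $\c P_n^{\b s'}$, then take the convex hull with $\b 0$. Your write-up is in fact more careful than the paper's, which simply asserts that the two vertex sets agree except for $\b 0$ and does not explicitly address why the extra vertex $\b e_n$ of the translate becomes redundant; your collinearity observation and the check that $\b 0\notin \c P_n^{\b s}+\b e_n$ fill in exactly those details.
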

\begin{proof}

Note that $\Pn + \b e_n$ has the same set of vertices as $\mathcal{P}_n^{\b s'}$, except for the point $\b 0$.
Hence, $\conv\bigl( (\Pn + \b e_n) \cup \{\b 0\} \bigr)$, contain the same set of vertices that $\mathcal{P}_n^{\b s'}$ and the result follows.
\end{proof}

\begin{lemma}\label{lem:Unimodular_triangulation}
Let $\b s = (s_1, \ldots, s_n)$ and $\b s' = (s_1, \ldots, s_n + 1)$ such that $s_{n} = k \cdot s_{n-1}$ for some positive integer $k$. 
Let $\polytope{F}$ be the facet of $\Pn$ whose supporting hyperplane is given by $\{\b x\in \R^n~ :~ \frac{x_{n-1}}{s_{n-1}}=\frac{x_n}{s_n}\}$.
Let $\mathcal{T}$ be a triangulation of $\Pn$, and let $\mathcal{T}\left|_{\polytope{F}}\right.=\{\simplex\cap \polytope{F}~:~ \simplex\in \mathcal{T}\}$ be the induced triangulation on~$\polytope{F}$. 
Then,
\[
\widetilde{\mathcal{T}} = \{\simplex+\b e_n~:~\simplex\in\mathcal{T}\} \cup \{\conv(\simplex, -\b e_n)+\b e_n~ :~ \simplex\in\mathcal{T}\left|_{\polytope{F}}\right. \}
\]
is a triangulation of $\mathcal{P}_n^{\b{s}'}$. 
Moreover, if $\mathcal{T}$ is unimodular, then so is $\widetilde{\mathcal{T}}$.
\end{lemma}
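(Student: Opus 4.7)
The plan is to obtain this lemma by applying \Cref{thm:One_point_triangulation} to the situation set up by \Cref{lemma:s_lecture_one_point_extension}, and then to verify unimodularity by a row-reduction argument that leverages the divisibility hypothesis $s_n = k\cdot s_{n-1}$.

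By \Cref{lemma:s_lecture_one_point_extension}, $\mathcal{P}_n^{\b s'}$ is the one-point extension of $\Pn + \b e_n$ by the point $\b 0$, and one checks that $\b 0 \notin \Pn + \b e_n$ (since $-\b e_n \notin \Pn$), so the translated triangulation $\mathcal{T} + \b e_n$ meets the hypotheses of \Cref{thm:One_point_triangulation}. The next step is to determine which facets of $\Pn + \b e_n$ are visible from $\b 0$, equivalently which facets of $\Pn$ are visible from $-\b e_n = (0,\ldots,0,-1)$. Running through the $n+1$ defining inequalities of $\Pn$ at $-\b e_n$, only $\frac{x_{n-1}}{s_{n-1}} \leq \frac{x_n}{s_n}$ is strictly violated (since $0 > -\tfrac{1}{s_n}$), so $\polytope{F}$ is the unique visible facet. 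Plugging this into \Cref{thm:One_point_triangulation} produces exactly the formula for $\widetilde{\mathcal{T}}$ claimed in the statement.

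For the unimodularity claim, the simplices $\simplex + \b e_n$ with $\simplex \in \mathcal{T}$ are unimodular since translations preserve lattice volume. The heart of the argument is to show the pyramidal simplices $\conv(\simplex, -\b e_n) + \b e_n$ with $\simplex = \conv(\b v_0, \dots, \b v_{n-1}) \in \mathcal{T}|_{\polytope{F}}$ are unimodular. My plan is to exploit $s_n = k\cdot s_{n-1}$ to observe that every lattice point $\b x \in \polytope{F} \cap \Z^n$ satisfies $x_n = k\, x_{n-1}$, so the coordinate projection $\pi\colon \R^n \to \R^{n-1}$ dropping the last coordinate restricts to a lattice isomorphism $\polytope{F} \cap \Z^n \to \pi(\polytope{F}) \cap \Z^{n-1}$, with $\pi(\polytope{F}) = \mathcal{P}_{n-1}^{(s_1,\dots,s_{n-1})}$. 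Under this identification, $\mathcal{T}|_{\polytope{F}}$ maps to a unimodular triangulation of $\pi(\polytope{F})$. Finally, in the $n \times n$ edge matrix of $\conv(\simplex, -\b e_n)$ (with columns $\b v_1 - \b v_0, \dots, \b v_{n-1} - \b v_0, -\b e_n - \b v_0$), the $n$-th row of the first $n-1$ columns equals $k$ times the $(n-1)$-th row, because $v_n^{(i)} = k\, v_{n-1}^{(i)}$ for all $\b v_i \in \polytope{F}$. A single row operation $R_n \leftarrow R_n - k R_{n-1}$ therefore zeros out every entry of the last row except the final one (which becomes $-1$), and expanding along row $n$ reduces the determinant to $\pm 1$ times the $(n-1) \times (n-1)$ determinant of the edge matrix of $\pi(\simplex)$, which is $\pm 1$ by unimodularity of the projected simplex.

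The main obstacle is precisely this last step: without the hypothesis $s_n = k\cdot s_{n-1}$, lattice points of $\polytope{F}$ would project to a strictly sparser sublattice of $\Z^{n-1}$, so $\mathcal{T}|_{\polytope{F}}$ would not project to a unimodular triangulation and the row-reduction would leave behind a determinant larger than $1$. The divisibility hypothesis is exactly what synchronizes the lattices $\polytope{F} \cap \Z^n$ and $\Z^{n-1}$ along $\pi$, and it is this synchronization that constitutes the essential geometric content of the lemma.
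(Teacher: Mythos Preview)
Your proof is correct. For the first claim (that $\widetilde{\mathcal{T}}$ is a triangulation) you argue exactly as the paper does, via \Cref{thm:One_point_triangulation} and \Cref{lemma:s_lecture_one_point_extension}, though you add the explicit check that $\polytope{F}$ is the unique facet strictly visible from $-\b e_n$, which the paper leaves implicit. For unimodularity, however, you take a genuinely different route. The paper argues by \emph{counting}: it observes that $\polytope{F}$ is unimodularly equivalent to $\mathcal{P}_{n-1}^{(s_1,\dots,s_{n-1})}$ (so $|\mathcal{T}|_{\polytope{F}}| = \prod_{i=1}^{n-1}s_i$), and then checks that the total number of maximal simplices in $\widetilde{\mathcal{T}}$ equals $\prod_{i=1}^{n-1}s_i\cdot(ks_{n-1}+1) = \mathrm{nvol}(\mathcal{P}_n^{\b s'})$, forcing every simplex to be unimodular. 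You instead compute the determinant of each pyramidal simplex directly, using the row operation $R_n \leftarrow R_n - kR_{n-1}$ to reduce to the edge matrix of the projected simplex $\pi(\simplex)$. Both arguments hinge on the same observation---that $s_n = k s_{n-1}$ makes $\pi$ a lattice isomorphism on $\polytope{F}$---but the paper's counting argument is slicker and avoids linear algebra, while your computation is more explicit and makes the mechanism of the divisibility hypothesis visible at the level of a single simplex. One small point: you implicitly use that the restriction $\mathcal{T}|_{\polytope{F}}$ is unimodular (with respect to the lattice in the affine span of $\polytope{F}$), which follows from the standard fact that faces of unimodular simplices are unimodular; the paper states this step explicitly.
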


\begin{proof}
The result that $\widetilde{\c T}$ be a triangulation of $\mathcal{P}_n^{\b s'}$ is a direct consequence of \Cref{thm:One_point_triangulation}(1) and \Cref{lemma:s_lecture_one_point_extension}.
Therefore, it is only necessary to verify that $\widetilde{\c T}$ is unimodular.
Suppose $s_n = k\cdot s_{n-1}$ for a positive integer $k$ and let $\b r \coloneqq (s_1,\ldots,s_{n-1})$. In this case, the facet $\polytope{F}$ is given as
\[
\polytope{F}=\{(x_1,\ldots,x_{n-1}, \frac{s_n}{s_{n-1}}x_{n-1})~:~ (x_1,\ldots,x_{n-1})\in \polytope{P}_{n-1}^{\b r}\}
\]
and as $\frac{s_n}{s_{n-1}}= k\in \mathbb{N}$, it follows that $\polytope{F}$ is unimodular equivalent to $\polytope{P}_{n-1}^{\b r}$.
Now assume that $\mathcal{T}$ is unimodular. Since $\mathcal{T}\left|_{\polytope{F}}\right.$ is the restriction of $\mathcal{T}$ to $\polytope{F}$, it is unimodular as well.
Hence, we know that the normalized volume of $\polytope{F}$ (with respect to its ambient lattice) equals the volume of $\polytope{P}_{n-1}^{\b r}$, which is $ \prod_{i=1}^{n-1}s_i$, and it follows that the number of maximal simplices in the triangulation $\widetilde{\mathcal{T}}$ is
\[
|\mathcal{T}|+\mathrm{nvol}(\c P_n^{\b r})=\mathrm{nvol}(\Pn)+\prod_{i=1}^{n-1}s_i=\prod_{i=1}^{n-1}s_i\cdot (ks_{n-1}) + \prod_{i=1}^{n-1}s_i=\prod_{i=1}^{n-1} s_i \cdot s'_n
\]

As this number is equal to the normalize volume of $\c P_n^{\b s'}$, we conclude that $\widetilde{\mathcal{T}}$ is unimodular.
\end{proof} 



\begin{theorem}
\label{Thm:unimodular-tri}
Let $\b s = (s_1,\dots, s_{n-1})$, and $s_n = k\cdot s_{n-1} + \varepsilon$ with $k\geq 0$ and $\varepsilon\in\{0, 1\}$ (with $s_n\neq 0$).
If $\mathcal{P}_{n - 1}^{\b s}$ admits a unimodular, flag, and regular triangulation, then so does $\mathcal{P}_{n}^{(\b s,s_n)}$.
\end{theorem}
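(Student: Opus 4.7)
I would handle the cases $\varepsilon = 0$ and $\varepsilon = 1$ separately. For $\varepsilon = 1$ (where $s_n = k s_{n-1} + 1$), the argument reduces to the case $\varepsilon = 0$: given the flag, regular, unimodular triangulation of $\c P_n^{(\b s, k s_{n-1})}$ produced by the $\varepsilon = 0$ step, I would apply \Cref{lem:Unimodular_triangulation} to obtain a unimodular triangulation of $\c P_n^{(\b s, k s_{n-1} + 1)}$. As this new triangulation is, by \Cref{lemma:s_lecture_one_point_extension}, an instance of the one-point extension framework of \Cref{thm:One_point_triangulation}, part~(2) of the latter guarantees that it remains flag and regular.

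For the case $\varepsilon = 0$ (so $s_n = k s_{n-1}$ with $k \geq 1$), the key geometric observation is that $\c P_n^{(\b s, k s_{n-1})}$ is a pyramid with apex $\b 0$ over $\c P_{n-1}^{\b s}\times\{k s_{n-1}\}$: indeed, among its $n+1$ vertices, only $\b 0$ does not lie in the hyperplane $\{x_n = k s_{n-1}\}$, while the remaining $n$ project onto the vertices of $\c P_{n-1}^{\b s}$. I would refine the naive pyramid triangulation $\{\conv(\b 0, \sigma\times\{k s_{n-1}\}) : \sigma\in\c T\}$ (which has $|\c T|$ simplices, each of normalized volume $k s_{n-1}$) into a unimodular triangulation by incorporating the lattice points lying on the segments from $\b 0$ to the vertices of each $\sigma\times\{k s_{n-1}\}$. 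In particular, the $k s_{n-1} - 1$ interior lattice points $(0, \ldots, 0, j)$ on the $x_n$-axis are shared by every pyramid cone, so the per-cone refinements agree on common faces and glue into a global triangulation of $\c P_n^{(\b s, k s_{n-1})}$.

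The main obstacle is verifying the three properties of this refinement. For unimodularity, each pyramid cone $\conv(\b 0, \sigma\times\{k s_{n-1}\})$ should split into exactly $k s_{n-1}$ unimodular simplices whose vertices come from $\sigma\times\{k s_{n-1}\}$ together with consecutive lattice points on the rays through $\b 0$; this requires an explicit determinant check that leverages both the lecture-hall structure of $\b s$ and the unit-lattice-spacing between consecutive points along each such ray (after rescaling), and closely mirrors the chimney subdivision of \cite{hibi2016}. Regularity can then be obtained by lifting the vertices with heights that are strictly convex along each ray through $\b 0$, so that the lower envelope of the lifted simplices reproduces exactly the subdivision. Flagness is inherited from the flagness of $\c T$ once one checks that the minimal non-faces of the refined simplicial complex all have cardinality $2$: any potential larger non-face would have to contain two non-adjacent lattice points on some common ray through $\b 0$, and such a pair is already a non-face, ruling out minimal non-faces of higher cardinality.
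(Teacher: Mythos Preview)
Your $\varepsilon = 1$ reduction via \Cref{lem:Unimodular_triangulation} and \Cref{thm:One_point_triangulation}(2) is correct and matches the paper exactly, but note that it does not cover $k = 0$ (which forces $s_n = 1$): your reduction would require the degenerate object $\c P_n^{(\b s,\, 0)}$, and \Cref{lem:Unimodular_triangulation} assumes $k\geq 1$. The paper treats $s_n = 1$ separately as the height-$1$ pyramid, where the naive pyramid triangulation is already flag, regular and unimodular.

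The substantive gap is in your $\varepsilon = 0$, $k \geq 1$ argument. First, the points $(0,\dots,0,j)$ lie in the pyramid cone over $\sigma$ only when $\b 0 \in \sigma$, so they are \emph{not} shared by every cone as you claim. More importantly, lattice points on the rays from $\b 0$ to the vertices of $\sigma \times \{ks_{n-1}\}$ do not suffice for unimodularity: for $\b s = (3)$, $k = 1$ and $\sigma = [1,2]$, the cone $\conv\bigl((0,0),(1,3),(2,3)\bigr)$ has normalized volume $3$, yet the only lattice points on its two rays are the three vertices themselves, whereas any unimodular triangulation must use the interior lattice point $(1,2)$. You say your construction ``closely mirrors the chimney subdivision of \cite{hibi2016}'', and indeed the paper simply invokes that result here; but the chimney construction views $\c P_n^{(\b s,\, ks_{n-1})}$ as $\{(\b x', x_n) : \b x'\in \c P_{n-1}^{\b s},\ k x'_{n-1}\leq x_n\leq ks_{n-1}\}$ and uses lattice points on the \emph{vertical} fibers $\{v\}\times[k\, v_{n-1},\, ks_{n-1}]$ over each vertex $v$ of $\c T$, not on rays from the apex. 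Flagness and regularity are then supplied by \cite[Theorem~2.8 \& Corollary~2.9]{MR4277268}.
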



\begin{proof}
First assume $k\geq 1$. 
For the case $\varepsilon = 0$, a unimodular triangulation was constructed in \cite[Thm. 3.3]{hibi2016} using chimney polytopes. It follows from \cite[Thm 2.8 \& Cor 2.9]{MR4277268} that this triangulation is also regular and flag.

Now let  $\varepsilon = 1$ and $k\geq 1$. 
Let $\c T$ be a unimodular, flag and regular triangulation of $\mathcal{P}_n^{(\b s, ks_{n-1})}$, which exists due to the already handled case $\epsilon=0$. 
Then, by \Cref{lem:Unimodular_triangulation}, we have that
\[
\widetilde{\mathcal{T}} = \{\simplex+\b e_n~:~\simplex\in\mathcal{T}\} \cup \{\conv(\simplex, -\b e_n)+\b e_n~ :~ \simplex\in\mathcal{T}_{\polytope{F}} \}
\]
is a unimodular triangulation of $\mathcal{P}_n^{\b s'}$.
Moreover, by \Cref{thm:One_point_triangulation}(2) and \Cref{lemma:s_lecture_one_point_extension},  since $\c T$ is flag and regular, so is $\widetilde{\c T}$.


 Finally, assume $k=0$. Since $s_n\neq 0$, we have $s_n=1$, \ie $\varepsilon = 1$. 
Observe that $\mathcal{P}_{n}^{(\b s,s_n)}$ is the pyramid over $\mathcal{P}_{n-1}^{\b s}$ embedded in $\R^{n}$ at height $x_{n} = 1$  with apex $\b 0$.
This implies that if  $\mathcal{T}$ is a flag, regular, unimodular triangulation of $\mathcal{P}_{n-1}^{\b s}$, then 
$\widetilde{\mathcal{T}} = \{\Pyr(\simplex, \b 0) ~ : ~ \simplex\in\mathcal{T} \}$ is a flag, regular, unimodular triangulation of $\mathcal{P}_{n}^{\b{s'}}$.
\end{proof}




\begin{remark}\label{rem:reversedSequence}
For $\b s = (s_1, \dots, s_n)$, let $\b s^{\mathrm{rev}} = (s_n,\dots, s_1)$ be the reversed sequence.
The simplices $\c P_n^{\b s^{\mathrm{rev}}}$ and $\Pn$ are unimodular equivalent: applying central symmetry to $\Pn$, then translating by the vector $(s_1, \ldots, s_n)$, then permuting (reversing) the coordinates, one gets $\c P_n^{\b s^{\mathrm{rev}}}$. 
Consequently, the polytope $\c P_n^{\b s^{\mathrm{rev}}}$ admits a flag, regular, unimodular triangulation if and only if $\Pn$ admits one.
Note that this argument was used in \cite[Remark 3.4]{hibi2016} to conclude that sequences $\b s$ satisfying $s_{i+1}=\frac{s_{i}}{k_i}$ with integers $k_i\geq 1$, for all $1\leq i\leq n-1$, give rise to $\b s$-lecture hall simplices having a unimodular triangulation.

With this idea, we add to \Cref{Thm:unimodular-tri} that:
For $\b s = (s_1, \dots, s_{n-1})$ and $s_0 = k\cdot s_1 + \varepsilon$ with $k\geq 0$ and $\varepsilon\in \{0, 1\}$, if $\c P_{n-1}^{\b s}$ admits a flag, regular, unimodular triangulation, then $\c P_n^{(s_0, \b s)}$ admits a flag, regular, unimodular triangulation.
\end{remark}


Combining \Cref{Thm:unimodular-tri,rem:reversedSequence}, we get the following direct consequence.

\begin{corollary}\label{cor:extended}
Let $\b s = (s_1,\dots, s_n)$.
If there exists $1\leq m\leq n$ such that 
\begin{enumerate}
    \item[(1)] for all $1\leq i\leq m-1$ either $s_i = 1$ or there is $k_i\geq 1$ and $\epsilon_i\in\{0,1\}$ with $s_{i+1} =  \frac{s_{i}-\epsilon_i}{k_i}$; and,
    \item[(2)] for all $m\leq i\leq n-1$ either $s_{i+1} = 1$ or there is $k_i\geq 1$ and $\epsilon_i\in\{0,1\}$ with $s_{i+1} = k_i\cdot s_{i} + \epsilon_i$;
\end{enumerate}
then $\mathcal{P}_{n}^{\b s}$ has a flag, regular, unimodular triangulation.
\end{corollary}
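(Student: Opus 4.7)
The plan is to prove this corollary by a straightforward two-sided induction starting from the "pivot" index $m$ and building outward, using \Cref{Thm:unimodular-tri} to append entries on the right and \Cref{rem:reversedSequence} to prepend entries on the left. Concretely, I first establish the base case: the simplex $\c P_1^{(s_m)}$ is just the segment $[0,s_m]\subset\R$, which admits the obvious flag, regular, unimodular triangulation into unit intervals.

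Next I handle the prepending phase. I claim by descending induction on $j\in\{m,m-1,\dots,1\}$ that $\c P_{m-j+1}^{(s_j,s_{j+1},\dots,s_m)}$ admits a flag, regular, unimodular triangulation. The inductive step is precisely \Cref{rem:reversedSequence} applied to the sequence $(s_{j+1},\dots,s_m)$: condition~(1) at index $i=j$ says that either $s_j=1$ (corresponding to $k=0,\varepsilon=1$ in the prepending statement of the Remark) or $s_j=k_j s_{j+1}+\varepsilon_j$ with $k_j\geq 1$ and $\varepsilon_j\in\{0,1\}$ (corresponding directly to the hypothesis of the Remark). In both subcases the Remark gives a flag, regular, unimodular triangulation of $\c P_{m-j+1}^{(s_j,\dots,s_m)}$, which completes the induction.

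For the appending phase, I then do ascending induction on $j\in\{m,m+1,\dots,n\}$, showing that $\c P_j^{(s_1,\dots,s_j)}$ admits a flag, regular, unimodular triangulation. Starting from $j=m$, which is the polytope produced at the end of the prepending phase, the inductive step uses \Cref{Thm:unimodular-tri}: condition~(2) at index $i=j$ states that either $s_{j+1}=1$ (handled by the Theorem with $k=0,\varepsilon=1$) or $s_{j+1}=k_j s_j+\varepsilon_j$ with $k_j\geq 1$ and $\varepsilon_j\in\{0,1\}$ (handled directly). The Theorem then produces the required triangulation for $\c P_{j+1}^{(s_1,\dots,s_{j+1})}$. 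After $n-m$ steps, we reach $\c P_n^{\b s}$.

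The edge cases are automatic: if $m=1$ the prepending phase is empty, while if $m=n$ the appending phase is empty. I do not anticipate any genuine obstacle here — the work has already been done in proving \Cref{Thm:unimodular-tri} and observing \Cref{rem:reversedSequence}; the only thing to check is the bookkeeping that the "$s_i=1$" and "$s_{i+1}=1$" alternatives in the hypothesis really correspond to the degenerate case $k=0,\varepsilon=1$ allowed by the Theorem (and its reversed version). This is immediate since $0\cdot s +1 = 1$ for any $s$, and the extension polytope $\c P_j^{(\cdot,\dots,\cdot,1)}$ in that case is simply a pyramid, as already noted at the end of the proof of \Cref{Thm:unimodular-tri}.
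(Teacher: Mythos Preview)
Your proposal is correct and follows essentially the same approach as the paper: build outward from the pivot $m$, using \Cref{Thm:unimodular-tri} to append on the right and \Cref{rem:reversedSequence} to handle the left side. The only cosmetic difference is that the paper reverses $(s_1,\dots,s_m)$, applies \Cref{Thm:unimodular-tri} repeatedly to the reversed sequence, and then reverses back, whereas you invoke the ``prepending'' formulation at the end of \Cref{rem:reversedSequence} directly; these are equivalent since that formulation is itself derived from reverse--append--reverse.
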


\begin{proof}
Fix $\b s$ satisfying the above conditions, and let $\b r = (s_1, \dots, s_m)^{\mathrm{rev}}=(s_m,\ldots,s_1)$.
By (1), we have for all $1\leq i\leq m-1$ that $r_{i+1} = k_i\cdot r_i + \varepsilon_i$ with $k_i\geq 0$ and $\varepsilon_i \in\{0, 1\}$.
Hence, applying \Cref{Thm:unimodular-tri} repetitively ensures that $\c P_m^{\b r}$ admits a flag, regular, unimodular triangulation (as $\c P_1^{(r_1)}$ admits one).
Consequently, by the reversing property of \Cref{rem:reversedSequence}, the simplex $\c P_m^{(s_1, \dots, s_m)}$ admits a flag, regular, unimodular triangulation too.
Applying \Cref{Thm:unimodular-tri} repetitively for $m\leq i\leq n-1$, condition (2) guarante     es that $\Pn$ admits a flag, regular, unimodular triangulation.
\end{proof}

\begin{remark}
We want to emphasize, that all the triangulations discussed in \Cref{Thm:unimodular-tri,rem:reversedSequence} are constructed explicitly (contrarily to \cite{LiuAndSolus2019}) by induction, assuming that the triangulation of $\c P_n^{\b s}$ is given explicitly (see \cref{fig:Lemma_pyramid}).
This still remains true for the triangulations coming from the more general sequence in \Cref{cor:extended}.
\begin{figure}[H] 
    \centering
    \begin{tikzpicture}[scale=0.5][very thick,opacity=1]








\begin{scope}[shift={(-5, 0)}, fill opacity= 1]

    \fill[fill = green!40,  opacity=0.6] (0,3) -- (0,11) -- (6,11) ;

    \draw (0, 0) node[below left]{\scriptsize{$(0,0)$}};
    \draw (0, 3.3) node[below left]{\scriptsize{$(0,1)$}};
    \draw (0, 11.5) node[below left]{\scriptsize{$(0,s_n + 1)$}};
    \draw (10 , 11.5) node[below left]{\scriptsize{$(s_{n - 1}, s_n + 1)$}};

    \draw[dotted]  (0 , 3) -- (6 , 11) ;
    \draw[black, very thick](0 , 3) -- (0, 11) ;
    \draw[black, dotted](0 , 3) -- (0, 0) ;
    \draw[red, thick](0 , 3) -- (6 , 11) ;
    \draw[black, very thick](0 , 11) -- (6 , 11) ;

    \draw (0.45 , 0.5 ) node[below left]{$\bullet$};
    \draw (0.45 , 11.4 ) node[below left]{$\bullet$};
    \draw (6.4 , 11.4 ) node[below left]{$\bullet$};
    \draw (7 , 10) node[below]{\textcolor{red}{$\polytope{F} + \b e_n$}};
    \draw (-2 , 7.4) node[below]{\textcolor{green}{$ \Pn + \,\b e_n$}};
    \draw (1.5 , 8.3) node[below]{$ \mathcal{T}$};
    \draw (0.45, 3.5) node[below left]{$\bullet$};
    \draw (2.05 , 5.55) node[below left]{$\bullet$};
    \draw (4.5 , 8.8) node[below left]{$\bullet$};
    \draw (2.5 , 6.7) node[below left]{$\cdot$};
    \draw (2.9 , 7.2) node[below left]{$\cdot$};
    \draw (3.3 , 7.7 ) node[below left]{$\cdot$};

\end{scope}

\begin{scope}[shift={(11.8, 0)}, fill opacity= 1]

    \fill[fill=blue!30,  opacity=0.6] (0,0) -- (0,3) -- (6,11) ; 
    \fill[fill = green!40,  opacity=0.6] (0,3) -- (0,11) -- (6,11) ;

    \draw (0, 0) node[below left]{\scriptsize{$(0,0)$}};
    \draw (0, 3.3) node[below left]{\scriptsize{$(0,1)$}};
    \draw (0, 11.5) node[below left]{\scriptsize{$(0,s_n + 1)$}};
    \draw (10 , 11.5) node[below left]{\scriptsize{$(s_{n - 1}, s_n + 1)$}};

    \draw[red, thick]  (0 , 3) -- (6 , 11) ;
    \draw[black, very thick](0 , 0) -- (0, 11) ;
    \draw[black, very thick](0 , 0) -- (6 , 11) ;
    \draw[black, very thick](0 , 11) -- (6 , 11) ;
    \draw[dotted]  (0 , 0) -- (1.6 , 5) ;
    \draw[dotted]  (0 , 0) -- (4.2 , 8.6) ;
    \draw (2 , 5 ) node[below left]{$\cdot$};
    \draw (2.4 , 5 ) node[below left]{$\cdot$};
    \draw (2.2 , 5 ) node[below left]{$\cdot$};
    \draw (0.45 , 0.5 ) node[below left]{$\bullet$};
    \draw (0.45 , 11.4 ) node[below left]{$\bullet$};
    \draw (6.4 , 11.4 ) node[below left]{$\bullet$};
    \draw (0.45, 3.5) node[below left]{$\bullet$};
    \draw (2.05 , 5.55) node[below left]{$\bullet$};
    \draw (4.5 , 8.8) node[below left]{$\bullet$};
    \draw (2.5 , 6.7) node[below left]{$\cdot$};
    \draw (2.9 , 7.2) node[below left]{$\cdot$};
    \draw (3.3 , 7.7 ) node[below left]{$\cdot$};
    \draw (1.5 , 8.3) node[below]{$ \mathcal{T}$};
    \draw (7 , 10) node[below]{\textcolor{red}{$\polytope{F} + \b e_n$}};
    \draw (5.5, 4.5) node[below]{\textcolor{blue!40}{$\Pyr(\polytope{F},\b -e_n) + \b e_n$}};

    \draw (-2 , 7.4) node[below]{\textcolor{green}{$ \Pn + \,\b e_n$}};

\end{scope}

\end{tikzpicture}
    \caption{Triangulation of $\mathcal{P}_n^{\b s'}$ with $\b s' = (s_1, \ldots, s_n + 1)$.}
    \label{fig:Lemma_pyramid}
\end{figure}
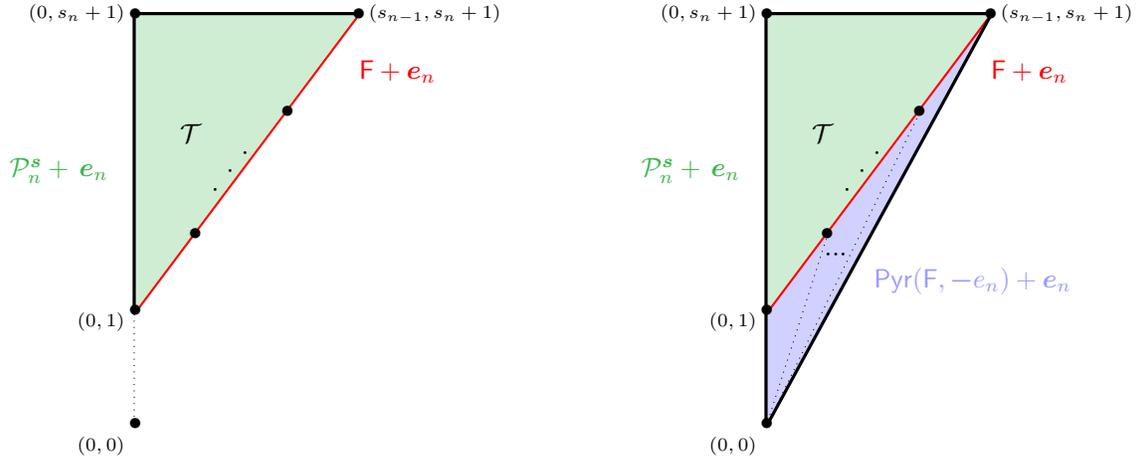  

\end{remark}

\begin{remark}
By \Cref{cor:extended}, for a given $\b s$, we can run a test in linear time that, if positive, tells us that $\Pn$ admits a flag, regular, unimodular triangulation; but can give us false-negative: namely, testing condition (1) \& (2) for all $1\leq i\leq n-1$ (and concluding on the existence of $m$).
\end{remark}

\begin{example}
We give explicit $\b s$ for which $\Pn$ admits a flag, regular, unimodular triangulation:
\begin{enumerate}
\item Consider the sequence $\b s= (1,2,4,5,10,11,12,24)$. 
Observe that each term can be expressed in the form $s_{i+1} = k_is_i + \epsilon$ where $\epsilon_i\in\{0,1\}$ and $k_i \geq 1$. 
Thus, by  \Cref{cor:extended}(1), the simplex $\mathcal{P}_{8}^{\b s}$ admits a flag, regular, unimodular triangulation.
\item The sequence $\b s' = (17, 8, 4, 1,10, 3, 6, 13)$ falls into the class of sequences described in \Cref{cor:extended}(1)\&(2), with $m = 6$.
Hence, the simplex  $\mathcal{P}_{8}^{\b s'}$ admits a flag, regular, unimodular triangulation.
One should note that this sequence $\b s$ is not increasing nor decreasing.
\end{enumerate}
\end{example}

\begin{example}\label{example:wherever}
For $\b s = (3, 5, 2)$, a computer experiments yields a flag, regular, unimodular triangulation for~$\Pn$ (see \Cref{Anex}).
Using \Cref{Thm:unimodular-tri,rem:reversedSequence}, we can create an infinite family of new sequences $\b s'$ for which $\c P_n^{\b s'}$ admits a flag, regular, unimodular triangulation:
it is enough that $s'_{m+1} = s_1, \dots, s'_{m+3} = s_3$, and that $(s'_1, \dots, s'_{m+1})$ satisfies condition (1) of \Cref{cor:extended}, while $(s'_{m+3}, \dots, s'_n)$ satisfies condition (2) of \Cref{cor:extended}.

Up to our knowledge, no previous theorem (including \Cref{cor:extended} it self) allowed to prove that $\c P_n^{\b s'}$ admits a flag, regular, unimodular triangulation, for such aforementioned $\b s'$.

\end{example}



One might wonder whether \Cref{Thm:unimodular-tri} extends to cases where $s_n = ks_{n-1} + \epsilon$ with $\epsilon \geq 2$.  
Our proof crucially relies on the fact that when $\epsilon \in \{0,1\}$, exactly one new lattice point appears in the region $\polytope{R}_2$ (see \Cref{fig:remark_final}).  
On the opposite, for $\epsilon \geq 2$, the region $\polytope{R}_3$ in \Cref{fig:remark_final} may contain multiple new lattice points.  
In such cases, the one-point extension construction fails to yield a unimodular triangulation, as some lattice points would not be vertices of the extended triangulation.  
It remains unclear whether a unimodular refinement can still be constructed.
Motivated by this, we leave the following question open:

\begin{Question}
For a sequence $\b s = (s_1, \dots, s_{n-1})$ and $s_n = ks_{n-1} + \epsilon$ with $\epsilon \geq 2$, can a flag, regular, unimodular triangulation of $\c P_n^{(\b s, s_n)}$ be explicitly constructed from one of $\Pn$?
\end{Question}

\begin{figure}[ht]
\usetikzlibrary{patterns,patterns.meta}
\centering
\begin{tikzpicture}[scale=0.85]

\begin{scope}[shift={(8, 0)}]
\draw[fill = green!20] (0, 2) -- (0, 8) -- (3,8) -- cycle;
\draw[fill = blue!20] (0, 1) -- (0, 2) -- (3,8) -- cycle;
\draw[fill = violet!20] (0, 0) -- (0, 1) -- (3,8) -- cycle;

\foreach \x in {0,...,3}{
    \foreach \y in {0,...,8}{
        \pgfmathtruncatemacro{\comp}{int(3*\y - 8*\x)}
        \ifnum\comp> -1
            \draw(\x , \y) node{$\bullet$};
        \fi
    }
}

\draw (-0.1 , 5) node[below left]{$\textcolor{green}{\polytope{R}_1}$};
\draw (-0.1 , 1.7) node[below left]{$\textcolor{blue!50}{\polytope{R}_2}$};
\draw (-0.1 , 0.7) node[below left]{$\textcolor{violet!50}{\polytope{R}_3}$};


\end{scope}

\begin{scope}[shift={(0, 0)}]
\draw[fill = green!20] (0, 2) -- (0, 8) -- (3,8) -- cycle;
\draw[fill = blue!20] (0, 1) -- (0, 2) -- (3,8) -- cycle;

\foreach \x in {0,...,3}{
    \foreach \y in {1,...,8}{ 
        \pgfmathtruncatemacro{\comp}{int(3*\y - 8*\x)}
        \ifnum\comp> -1
            \draw(\x , \y) node{$\bullet$};
        \fi
    }
}
\fill[white] (1,3) circle (3pt);
\draw (-0.1 , 5) node[below left]{$\textcolor{green}{\polytope{R}_1}$};
\draw (-0.1 , 1.7) node[below left]{$\textcolor{blue!50}{\polytope{R}_2}$};

\end{scope}

\end{tikzpicture}
\caption{(Left) For $\b s = (3,\, 2\cdot 3+1)$, only $\textcolor{blue!50}{\polytope{R}_2}$ appears, with a single point added.
(Right) The subpolytopes $\textcolor{blue!50}{\polytope{R}_2}$ and $\textcolor{violet!50}{\polytope{R}_3}$ for $\b s = (3,\, 2\cdot 3+2)$.}
\label{fig:remark_final}
\end{figure}

We extend \Cref{cor:extended} by applying some known results on flag, regular, unimodular triangulations of joins and dilations of polytopes.
We refer to \cite{MR4277268} for the detail of these tools.

\begin{definition}
    Let $\pol\subset \R^n $ and $\pol'\subset \R^{n'} $ two polytopes.
    Then define the \defn{join} of $\pol $ and $\pol'$ as $\mathdefn{\pol \ast \pol'} \coloneqq \conv(\{ (\b x , 0, \b 0) \, : \,  \b x\in \pol \} \cup  \{ ( \b 0 , 1, \b x') \, : \,  \b x'\in \pol' \} ) \subset \R^{n + n' + 1}$.
\end{definition}

\begin{proposition}\label{prop:join}
    Let $\b s$ and $\b s'$ be two integer sequences, and let $\b r = (\b s,\, 1, \, \b s')$.
    Then the polytope $\c P_{n + n' + 1}^{\b r}$ is (unimodular equivalent to) the join of the polytopes $\c P_n^{\b s}$ and $\c P_{n'}^{\b s'}$.
\end{proposition}

\begin{proof}
Let $\b s = (s_1, \dots, s_n)$, $\b s' = (s'_1, \dots, s_{n'}')$.
Let $E_{i,j}$ be the matrix that be $0$ in everywhere, except for its entry in row $i$, column $j$, which is $1$.
We take the matrix $T = I_{n + n' + 1} - \sum_{j = n' + 2}^{n+n'+1} E_{n' + 1, j}$.
Note that $T$ is unimodular because it is upper-triangular with $1$s on the diagonal.
We have:

$$\hspace{-0.75 cm} \begin{pmatrix}
    0 &  &        &   &       &  & &  & 0 & s_1 \\
     &   &        &  &         &  & &  & s_2 & s_2 \\
    &    &        &   &         &  & & \iddots & \vdots  & \vdots  \\
    &    &        &  &         &  & s_n  &\cdots & s_n & s_n \\
    &    &        &   &         & 1 & 1  &\cdots & 1 & 1 \\
    &    &        &  & s'_{1'} & s'_{1'} & s'_{1'} & \cdots & s'_{1'} & s'_{1'} \\ 
    &    &        & s_2'  & s'_{2'} & s'_{2'} & s'_{2'} & \cdots & s'_{2'} & s'_{2'} \\ 
    &    &    \iddots    &  \vdots & \vdots  & \vdots & \vdots & \vdots & \vdots & \vdots \\ 
  0 &   s_{n'}' & \dots & s_{n'}'  & s_{n'}' & \cdots & s'_{n'} & s'_{n'} & \cdots &  s'_{n'}
\end{pmatrix} ~\cdot~ T = 
\begin{pmatrix}
    0 &  &        &   &       &  & &  & 0 & s_1 \\
     &   &        &  &         &  & &  & s_2 & s_2 \\
    &    &        &   &         &  & & \iddots & \vdots  & \vdots  \\
    &    &        &  &         &  & s_n  &\cdots & s_n & s_n \\
    &    &        &   &         & 1 & 1  &\cdots & 1 & 1 \\
    &    &        &  & s'_{1'} & 0 & 0 & \cdots & 0 & 0 \\ 
    &    &        & s_2'  & s'_{2'} & 0 & 0 & \cdots & 0 & 0 \\ 
    &    &    \iddots    &  \vdots & \vdots  & \vdots & \vdots & \vdots & \vdots & \vdots \\ 
  0 &   s_{n'}' & \dots & s_{n'}'  & s_{n'}' & \cdots & 0 & 0  & \cdots & 0
\end{pmatrix} $$

The columns of the right-hand-side matrix are the vertices of $\c P_n^{\b s'} \ast \c P_{n'}^{\b s'}$.
This shows that $\c P_{n+n'+1}^{\b r}$ is unimodular equivalent to $\c P_n^{\b s'} \ast \c P_{n'}^{\b s'}$.
\end{proof}

\begin{corollary}\label{coro:join}
    Let $\b s$ and $\b s'$ be two integer sequences, and let $\b r = (\b s,\, 1, \, \b s')$.
    If $\c P_n^{\b s}$ and $\c P_n^{\b s'}$ admit a flag, regular, unimodular triangulation, then $\c P_n^{\b r}$ also admits a flag, regular, unimodular triangulation for $\b r = (\b s,\, 1, \, \b s')$.
\end{corollary}

\begin{proof}
By \cite[Section 2.3.2]{MR4277268}, if two polytopes admits flag, regular, unimodular triangulation, then their join also admits one.
By \Cref{prop:join}, the polytope $\c P_n^{\b r}$ is unimodular equivalent to the join of $\c P_n^{\b s}$ and $\c P_n^{\b s'}$.
\end{proof}

\begin{remark}\label{Remark:dilation}
    We presented two operations building on concatenation to ensure the existence of (flag, regular) unimodular triangulations.
    Note that we can also dilate the sequence, namely denoting $\lambda \b s = (\lambda s_1, \cdots, \lambda s_n)$ for some integer $\lambda\geq 0$, we get $\c P_n^{\lambda \b s} = \lambda \c P_n^{\b s}$.
    As the latter only amounts to dilation, by \cite[Theorem 4.11]{MR4277268}, if $\c P_n^{\b s}$ admits a flag, regular, unimodular triangulation, then $ \c P_n^{\lambda \b s}$ also admits a flag, regular, unimodular triangulation.
\end{remark}

Finally, we present a result that connects \Cref{Section:Ehrhart_nonpositive,Section:Regular_Unimodular}.
To simplify notation, for positive integers $a$, $b$, $k_2$, and nonnegative integers $k_1$, $k_3$, we write $(\b 1^{k_1},\, a ,\, \b 1^{k_2},\, b,\, \b 1^{k_3})$ to denote the sequence $(\underbrace{1,\dots,1}_{k_1},\, a,\, \underbrace{1,\dots,1}_{k_2},\, b,\, \underbrace{1,\dots,1}_{k_3}).$

\begin{corollary}\label{Coro:Unimodular_Solus}
For $\b s = (\b 1^{k_1},\, a ,\, \b 1^{k_2},\, b,\, \b 1^{k_3})$, $\Pn$ admits a flag, regular, unimodular triangulation.
\end{corollary}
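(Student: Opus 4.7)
The plan is to apply \Cref{cor:extended} directly, taking the splitting index $m = k_1 + 1$ (the position of $a$ in $\b s$). This choice is natural because it separates $\b s$ into an initial block of $1$'s, to which condition~(1) applies trivially, from the tail containing $a$, the middle block of $1$'s, $b$, and the trailing $1$'s, which will be handled by condition~(2) almost entirely through the escape clause ``$s_{i+1}=1$''.

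More concretely, I would first verify condition~(1). For every $1 \leq i \leq k_1 = m-1$ we have $s_i = 1$, so the escape clause holds uniformly and no ratio identity is ever required; in particular, the jump from $s_{k_1} = 1$ to $s_{k_1+1} = a$ is absorbed by this clause. Next, I would verify condition~(2) over $k_1+1 \leq i \leq n-1$. Three kinds of transitions appear in this range: $a \to 1$, $1 \to 1$, and $b \to 1$ (this last one only if $k_3 \geq 1$), all of which have $s_{i+1}=1$ and so satisfy the escape clause. The only remaining transition is $s_{k_1+k_2+1} = 1 \to s_{k_1+k_2+2} = b$, for which I would take $k_i = b \geq 1$ and $\epsilon_i = 0$, giving $b = k_i \cdot 1 + \epsilon_i$ as required.

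The one hypothesis that really matters is $k_2 \geq 1$: it ensures that $a$ is immediately followed by a $1$, so we never face a direct transition $a \to b$ (which would require $b = ka + \epsilon$ with $\epsilon \in \{0,1\}$, not generally solvable). The boundary cases $k_1 = 0$ and $k_3 = 0$ cause no trouble: in the former, condition~(1) becomes vacuous (since $m-1 = 0$), and in the latter the index range in condition~(2) simply ends one step earlier at the transition $1 \to b$ that we have already handled. There is no substantial obstacle here; the entire proof reduces to checking, block by block, that the hypotheses of \Cref{cor:extended} are met by the choice $m = k_1+1$, after which the corollary supplies the desired flag, regular, and unimodular triangulation of $\Pn$.
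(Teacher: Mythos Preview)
Your proof is correct and essentially the same approach as the paper's: both amount to checking that every consecutive transition in $\b s$ falls under the hypotheses of the paper's main triangulation results. The paper invokes \Cref{Thm:unimodular-tri} directly at each step (equivalently, \Cref{cor:extended} with $m=1$, using $s_{i+1}\in\{1\cdot s_i,\,a\cdot s_i,\,b\cdot s_i,\,1\}$), whereas you package the same verification through \Cref{cor:extended} with $m=k_1+1$; the difference is cosmetic.
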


\begin{proof}
Note that for all $i$, we have that $s_{i+1}\in\{1\cdot s_i,\, a\cdot s_i,\, b\cdot s_i,\, 1\}$.
Thus, using \Cref{Thm:unimodular-tri} over each $1\leq i \leq n-1$, the result follows.
\end{proof}

\begin{corollary}\label{coro:constant_a}
For $\b s = (a, \ldots, a, a + 1)$, $\Pn$ admits a flag, regular, unimodular triangulation.
\end{corollary}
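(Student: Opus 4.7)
The plan is to recognize this as an immediate instance of Corollary~\ref{cor:extended}. I would take $m = 1$: then condition~(1) concerns the empty index range $1 \leq i \leq 0$ and is vacuously satisfied, so only condition~(2) needs checking.

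For condition~(2), I need to exhibit for every $1 \leq i \leq n-1$ either $s_{i+1}=1$, or integers $k_i\geq 1$ and $\epsilon_i\in\{0,1\}$ with $s_{i+1}=k_i\cdot s_i+\epsilon_i$. There are two cases. First, for $1\leq i\leq n-2$ one has $s_i = s_{i+1} = a$, so the choice $k_i = 1$, $\epsilon_i = 0$ gives $s_{i+1} = 1\cdot s_i + 0$. Second, at $i = n-1$ we have $s_{n-1}=a$ and $s_n = a+1$, so $k_{n-1}=1$, $\epsilon_{n-1}=1$ yields $s_n = 1\cdot s_{n-1} + 1$. With these witnesses the hypotheses of Corollary~\ref{cor:extended} hold, and the conclusion follows.

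There is essentially no technical obstacle: the corollary is a pure specialization. The point of recording it is twofold. First, the sequence $\b s = (a,\dots,a,a+1)$ also satisfies $0\leq s_{i+1}-s_i\leq 1$, so by Br\"and\'en--Solus \cite[Corollary 3.5]{SolusAndBradden} a flag, regular, unimodular triangulation exists; however, that argument proceeds via Gr\"obner bases and is not explicit, whereas the construction underlying Theorem~\ref{Thm:unimodular-tri} builds the triangulation combinatorially by successive pyramid extensions. Second, this corollary pairs with Theorem~\ref{The:Negativity}: precisely the sequences for which we exhibit Ehrhart non-positivity still admit flag, regular, unimodular triangulations, highlighting that these two properties are genuinely independent. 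Alternatively, one could spell out the construction directly: start from the trivial unimodular triangulation of the segment $\c P_1^{(a)}$, apply Theorem~\ref{Thm:unimodular-tri} iteratively $n-2$ times with $k=1$, $\epsilon=0$ to reach $\c P_{n-1}^{(a,\dots,a)}$, and then apply it once more with $k=1$, $\epsilon=1$ to obtain the desired triangulation of $\mathcal{P}_n^{\b s}$.
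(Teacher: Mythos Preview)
Your proof is correct and essentially the same as the paper's: the paper observes $s_{i+1}\in\{s_i,\,s_i+1\}$ and applies Theorem~\ref{Thm:unimodular-tri} iteratively, which is exactly your alternative phrasing (your main route via Corollary~\ref{cor:extended} with $m=1$ is just the packaged version of this iteration). Your additional remarks about the contrast with Br\"and\'en--Solus and the link to Ehrhart non-positivity are also accurate and mirror the discussion the paper gives immediately after the corollary.
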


\begin{proof}
Note that for all $i$, we have that $s_{i+1}\in\{s_i,\, s_i + 1\}$.
Thus using \Cref{Thm:unimodular-tri} over each $1\leq i \leq n-1$ the result follows.
\end{proof}

Liu and Solus \cite[Theorem 4.3]{LiuAndSolus2019} proved that for $n = k_1+k_2+k_3+2 \geq 3$, there are values of $k_1$, $k_2$, $k_3$, $a$, $b$ such that the simplex $\mathcal{P}_{n}^{(\b 1^{k_1},\, a,\, \b 1^{k_2},\, b,\, \b 1^{k_3})}$ is not Ehrhart positive.
Consequently, according to \Cref{The:Negativity,Coro:Unimodular_Solus,coro:constant_a}, both sequences $\b s = (\b 1^{k_1},\, a ,\, \b 1^{k_2},\, b,\, \b 1^{k_3})$ and $\b s = (a, \ldots, a, a + 1)$ give rise to $\b s$-lecture hall simplices $\Pn$ which are Ehrhart non-positive but admit a flag, regular, unimodular triangulation.
This contrast is particularly striking, as it illustrates the richness of the combinatorial and geometric behavior of $\b s$-lecture hall simplices: a single sequence can give rise to polytopes with seemingly contrasting properties. 
Such phenomena underscore the importance of studying these simplices from multiple perspectives.


\appendix
\section{Numerical values}\label{Anex}

All the values below have been computed using SageMath \cite{Sage}.

\paragraph{$\lambda_{a, b}$ for the proof of \Cref{eq:Eulerian_property}}

Here are the values of $\lambda_{a,b}$ for $0\leq a \leq 4$ and $0\leq b \leq 3$:

\begin{center}
    \begin{tabular}{c|c c c c c}
       $b\backslash a$  & $0$ & $1$ & $2$ & $3$ & $4$ \\ \hline
       $0$  &      & $-48$ & $-132$ & $-68$ & $-8$\\
       $1$  &      & $40 $ &  $56$  &  $12$ & \\
       $2$  & $-1$ & $-13$ & $-6$   &       & \\
       $3$  & $1$  & $1$   &        &       &
    \end{tabular}
\end{center}

\paragraph{Explicit values of $\beta(n)$ as defined in \Cref{ex:negative}}

\begin{center}\begin{tabular}{c|llll}
$n$ & $5$ & $6$ & $7$ & $8$ \\ \hline
$\beta(n)$ & $16$ & $19$ & $23$ & $27$
\end{tabular}\end{center}

\paragraph{Triangulation of $\c P_3^{(3,5,2)}$ from \Cref{example:wherever}}

The $18$ lattice points of $\c P_3^{(3,5,2)}$ are:
$$
\begin{array}{cccccc}
\b v_0 = (0, 0, 0) &  \b v_1 = (0, 0, 1) & \b v_2 = (0, 1, 1) 
&\b v_3 = (0, 2, 1) &  \b v_4 = (1, 2, 1) & \b v_5 =  (0, 0, 2) \\
\b v_6 = (0, 1, 2) &  \b v_7 = (0, 2, 2) & \b v_8 = (0, 3, 2) 
&\b v_9 = (0, 4, 2) &  \b v_{10} = (0, 5, 2) & \b v_{11} = (1, 2, 2) \\
\b v_{12} = (1, 3, 2) &  \b v_{13} = (1, 4, 2) & \b v_{14} = (1, 5, 2) 
&\b v_{15} = (2, 4, 2) &  \b v_{16} = (2, 5, 2) & \b v_{17} = (3, 5, 2) 
\end{array}
$$

The following list of $30$ subsets of indices $X$ gives a triangulation $\c T$ of $\c P_3^{(3, 5, 2)}$ whose simplices are $\conv\{\b v_i ~:~ i\in X\}$:
$$
\begin{array}{cccccc}
\{0,1,2,4\} & \{0,1,4,17\} & \{0,2,3,4\} & \{0,3,4,10\} & \{0,4,10,14\} & \{0,4,14,16\} \\
\{0,4,16,17\} & \{1,2,4,5\} & \{1,4,5,17\} & \{2,3,4,5\} & \{3,4,5,6\} & \{3,4,6,7\} \\
\{3,4,7,8\} & \{3,4,8,9\} & \{3,4,9,10\} & \{4,5,6,11\} & \{4,5,11,17\} & \{4,6,7,11\} \\
\{4,7,8,11\} & \{4,8,9,11\} & \{4,9,10,11\} & \{4,10,11,12\} & \{4,10,12,13\} & \{4,10,13,14\} \\
\{4,11,12,15\} & \{4,11,15,17\} & \{4,12,13,15\} & \{4,13,14,15\} & \{4,14,15,16\} & \{4,15,16,17\}
\end{array}
$$

The flagness of $\c T$ can be checked by constructing the 1-skeleton of the triangulation and check directly that all its cliques are (faces of) simplices in the triangulation.

This triangulation $\c T$ is regular: using the following height vector $\b\omega$, a computer check ensures that the lower facets of $\conv\{(\b v_i, \omega_i) : i\in [18]\}$ are $\conv\{(\b v_i, \omega_i) : i\in X\}$ for $X\in \c T$.

$$\b \omega = (0,\, 1,\, 0,\, 0,\, 0,\, 3,\, 4,\, 6,\, 9,\, 13,\, 18,\, 7,\, 13,\, 20,\, 28,\, 31,\, 40,\, 53)$$

\bibliographystyle{alpha}
\bibliography{Bib.bib}
\label{sec:biblio}

\end{document}